\documentclass[11pt,a4paper,twoside]{article}
\usepackage{enumerate}
\usepackage{amsmath}
\usepackage{fix-cm}
\usepackage{pb-diagram}
\usepackage[english]{babel}
\usepackage{amssymb,latexsym}
\usepackage{hyperref}
\usepackage{color}
\usepackage{authblk}
\DeclareMathOperator{\vol}{Vol}

\def \bui#1#2{\mathrel{\mathop{\kern 0pt#1}\limits^{#2}}}
\def \buil#1#2{\mathrel{\mathop{\kern 0pt#1}\limits_{#2}}}

\let\<\langle
\let\>\rangle
\newcommand{\R}{{\mathbb R}}

\newtheorem{example}{Examples}[section]
\newtheorem{thm}{Theorem}[section]
\newtheorem{lemma}[thm]{Lemma}
\newtheorem{prop}[thm]{Proposition}

\newtheorem{cor}[thm]{Corollary}
\newtheorem{remark}[thm]{Remark}
\newtheorem{remarks}[thm]{Remarks}
\newtheorem{definition}[thm]{Definition}
\newtheorem{notation}[thm]{Notation}
\newtheorem{exabout:ample}[thm]{Example}

\title{The Wentzell operator on differential forms}

\author{Fida El Chami}

\affil{\footnotesize Lebanese University, Faculty of Sciences II, Department of Mathematics, P.O. Box 90656 Fanar-Matn, Lebanon\\\vspace{3pt}
 \texttt{fchami@ul.edu.lb}}

\begin{document}
\maketitle
\noindent\begin{center}\begin{tabular}{p{115mm}}
\begin{small}{\bf Abstract.}
We extend the Wentzell operator to differential forms on compact manifolds with boundary, investigate its spectrum, and establish a variational characterization of the first eigenvalue. We explicitly compute the eigenvalues on the unit ball in Euclidean space. We also derive several eigenvalue estimates for the first eigenvalue, some of which are shown to be sharp. Finally, we obtain a lower bound for the gap between the lowest eigenvalues in consecutive degrees. 
\end{small}
\end{tabular}\end{center}

\noindent\begin{small}{\it Mathematics Subject Classification} (2020): 53C21, 58C40, 58J32, 58J50
\end{small}

\noindent\begin{small}{\it Keywords}: manifolds with boundary, Wentzell operator, boundary value problem, eigenvalue estimates   	
\end{small}

\section{Introduction}
Let $(M^{n+1},g)$ be a compact Riemannian manifold of dimension $n+1$ with nonempty smooth boundary $\partial M$. We denote by $\nu$ the inward unit normal vector field to the boundary. For $\beta \in \R$, the eigenvalue problem for Wentzell boundary conditions is defined as follows:
\begin{equation}
    \label{scalar_wentzell}
    \begin{cases}
        \Delta u=0 & \textrm{on } M,\\
        -\dfrac{\partial u}{\partial \nu} +\beta \, \Delta^{\partial M} u=\lambda u & \textrm{on } \partial M,
    \end{cases}
\end{equation}
where $\Delta$ and $\Delta^{\partial M}$ denote the Laplace-Beltrami operator on $M$ and $\partial M$, respectively. 
The coefficient $\beta$ appearing in the Wentzell boundary condition can be interpreted as a tangential diffusion parameter, arising as the limit of a thin, highly conductive coating layer as its thickness tends to zero 
\cite{BL1999}. 
Wentzell boundary conditions describe a coupling between the dynamics in the interior of a domain and the phenomena occurring on its boundary. In fact, the eigenvalue problem \eqref{scalar_wentzell} can be reformulated as $\mathcal{L}_\beta u=\lambda u$ where $\mathcal{L}_\beta$ is the Wentzell operator, given by the sum of the Dirichlet-to-Neumann (or Steklov) operator and of $\beta \Delta^{\partial M}$. In the case where $\beta=0$, the operator $\mathcal{L}_\beta$ reduces to the Steklov operator. 

Originally introduced by Feller \cite{Feller1952,Feller1954} in the study of one-dimensional diffusion processes and subsequently extended by Venttsel \cite{Wentzell1959} (now commonly spelled Wentzell) to the multidimensional setting, Wentzell-type boundary conditions have since become a classical topic, studied from several perspectives, including semigroup theory, partial differential equations, and spectral geometry. 
These questions 
have in particular been extensively investigated over the past decades, as the coupling between the interior operator and the boundary conditions naturally raises questions about how the spectrum encodes the geometry of the underlying domain. 
In this direction, Dambrine, Kateb, and Lamboley \cite{DKL2016} studied the first nontrivial eigenvalue of the Wentzell-Laplace operator and its dependence on the geometry of the domain. 
They also computed the spectrum on the Euclidean ball, allowing them to characterize the equality cases of their estimates. Further contributions 
established eigenvalue estimates and rigidity results for Wentzell-type problems under various geometric hypotheses on compact manifolds with boundary 
\cite{DuWangXia2018,XiaWang2018,DuMaoWangXia2018, Ndiaye2022, DengDuMaoZhao2026}. 

It is natural to ask whether the Wentzell operator can be extended to differential forms, and which  results from the scalar case have analogues in this setting. The Dirichlet-to-Neumann operator acting on differential forms (see~\cite[Subsection~1.1] {RS_DtN2012}, and the references therein) plays a key role in defining and studying the Wentzell operator $\mathcal{L}^{[p]}_\beta$ on $p$-forms (see Section~\ref{Sec:Wentzell_on_forms}), in analogy with the scalar case. 
Although the spectral analysis of the operator $\mathcal{L}^{[p]}_\beta$ follows the classical variational framework, a first difficulty lies in the choice of the appropriate functional space and the bilinear form. Indeed, 
it turns out that the space $H$ naturally associated with the Wentzell operator, consistently with the scalar case, is 
\begin{equation}
\label{H}    
H=\left\{ \omega \in H^1(M) \mid \iota^* \omega \in H^1(\partial M) \textrm{ and } \nu \lrcorner \, \omega=0\right\}.\end{equation}
However, 
the bilinear form associated with $\mathcal{L}^{[p]}_\beta$ is not coercive on $H$. We therefore first work on a suitable space $V$, where the bilinear form can be modified to ensure coercivity (\eqref{a_beta} and Theorem~\ref{thm:coerc_abeta}). We then 
prove that the variational problem on $H$ is equivalent to the problem formulated on $V$. We next derive a variational characterization for the first eigenvalue of the Wentzell problem, along with explicit computations and several estimates. Some of our results recover 
the corresponding results for $p=0$; see, for example, Proposition \ref{min_Wentz_DtN_Delta_bound}. Note that the eigenvalue problem associated with the Wentzell operator is not invariant under the Hodge star operator. This leads us to introduce a dual problem (Definition \ref{def:dual_went}), which is useful, in particular, to obtain upper bounds for the first eigenvalue for harmonic domains (Propositions~\ref{upp_bound_n_harmonic} and~\ref{upp_bound_arb_degree_harmonic}).

Our study of the eigenvalues of the Wentzell operator builds on corresponding results for the Dirichlet-to-Neumann operator established in \cite{RS_DtN2012} and \cite{RS_DtN2014}. For instance, under suitable curvature assumptions, Raulot and Savo~ \cite{RS_DtN2012} established the lower bounds 
$$\begin{cases}\medskip
    \nu_{1,p} (M) > \frac{n-p+2}{n-p+1}\sigma_p(\partial M), &  1 \le p < \frac{n+1}{2}, \\\medskip
    \nu_{1,p} (M) \ge \frac{p+1}{p}\sigma_p(\partial M), &  p \ge \frac{n+1}{2},
\end{cases}$$
        for the eigenvalues of the Dirichlet-to-Neumann operator, with a characterization of the equality case in the second inequality. Moreover, assuming that the Ricci curvature of $M$ is nonnegative and that the boundary is mean-convex, they obtained the inequality $\nu_{1,n}(M) \ge (n+1)H_0$, where $H_0 \ge 0$ is a lower bound of the mean curvature,  and proved that equality holds if and only if $M$ is a Euclidean ball.      
We generalize these results to the eigenvalues of the Wentzell operator in Propositions~\ref{lower_bounds_wentzell} and \ref{lower_bound_Went_n}. 
For harmonic domains, the inequality $$\nu_{1,n}(M) \le \frac{\mathrm{Vol}(\partial M)}{\mathrm{Vol}(M)},$$
 obtained in \cite{RS_DtN2012}, is also generalized to the first Wentzell eigenvalue in degree $n$ (see Proposition~\ref{upp_bound_n_harmonic}).

The paper is organized as follows. In Section~\ref{Sec:Wentzell_on_forms}, we extend the Wentzell operator  to differential forms, and investigate its spectral properties. We establish the inequalities and compactness results needed to make the paper self-contained and provide a variational characterization of its first eigenvalue. In Section~\ref{Sec:Wentz_spectrum_Euclidean_ball}, we explicitly compute the eigenvalues of the Wentzell operator $\mathcal{L}^{[p]}_\beta$ on the Euclidean unit ball. Section~\ref{Sec:Wentz_eigen_estimates} is devoted to estimates of the first eigenvalue $\lambda_{1,p,\beta}(M)$ of $\mathcal{L}^{[p]}_\beta$. In particular, we establish a lower bound of $\lambda_{1,p,\beta}(M)$ in terms of the first eigenvalue of the Dirichlet-to-Neumann operator and of that of the Hodge Laplacian of the boundary. We also derive a lower bound involving geometric quantities depending on the second fundamental form of the boundary. 
Furthermore, for harmonic domains, we obtain upper bounds for the first eigenvalue 
expressed in terms of the isoperimetric ratio $\dfrac{\textrm{Vol}(\partial M)}{\textrm{Vol}( M)}$, with additional geometric quantities in some cases. Finally, in   Section~\ref{Sec:Wentz_gap_estimate}, we derive a lower bound for the gap between the eigenvalues of the Wentzell operator in degrees $p$ and $p-1$.  
\textbf{Acknowledgments}: I would like to thank N.~Ginoux, G.~Habib, O.~Makhoul and S.~Raulot for the helpful discussions related to this work. 
\section{The Wentzell Laplacian on differential forms}
\label{Sec:Wentzell_on_forms}
Let $(M^{n+1},g)$ be a compact Riemannian manifold of dimension $n+1$ with smooth boundary $\partial M$ and let $\nu$ be the inward unit normal vector field to the boundary. For every $p$-form $\phi$ on $\partial M$, we denote by $\hat{\phi}$ the unique harmonic tangential extension of $\phi$ on $M$ (see \cite{Sc} for the existence and uniqueness of such an extension). The form $\hat{\phi}$ is the unique solution to the boundary value problem 
\begin{equation}\label{eq:bvpharmonictangext}
\left\{\begin{array}{lll}\Delta \hat{\phi}&=0&\textrm{ on
}M\\ \nu \lrcorner \, \hat{\phi}&=0&\textrm{ on
}\partial M\\
\iota^* \hat{\phi}&=\phi &\textrm{ on
}\partial M.
\end{array}
\right.
\end{equation}
Recall that the Dirichlet-to-Neumann operator acting on $p$-forms on $\partial M$ is defined as follows \cite{RS_DtN2012}: for every $\phi \in \Omega^{p}(\partial M)$,  
\begin{equation}
\label{Dirichlet-to-Neumann}
\mathcal{T}^{[p]}(\phi)=- \nu \lrcorner \, d \hat{\phi},
\end{equation}
where $\hat{\phi}\in \Omega^p(M)$ is the solution to~\eqref{eq:bvpharmonictangext}. 

Let $\beta \in \R$. For $p=0, \dots, n$, we define the Wentzell-Laplace operator, also called the Wentzell Laplacian or the Wentzell operator, $\mathcal{L}^{[p]}_\beta$ on $\Omega^p(\partial M)$ by 
\begin{equation}
\label{wentzell_operator}
   \mathcal{L}^{[p]}_\beta= \mathcal{T}^{[p]} + \beta \,\Delta^{\partial M} . 
\end{equation}
Here $\Delta^{\partial M}$ denotes the Hodge Laplacian acting on differential forms on the boundary. Note that for $\beta=0$, the operator $\mathcal{L}^{[p]}_\beta $ reduces to the Dirichlet-to-Neumann operator acting on differential forms, and for $p=0$, it reduces to the scalar Wentzell Laplacian $\mathcal{L}_\beta$ recalled in the introduction. 
In what follows, we assume $\beta>0$.  

In order to study the spectral properties of the Wentzell operator \eqref{wentzell_operator}, let us introduce some notation and recall a few useful results.  
Let ${H}^k \Omega^p(M)$ be the space of $p$-forms on $M$ of Sobolev class $H^k$. It will be denoted by $H^k(M)$ instead of ${H}^k \Omega^p(M)$ when the context is clear. 
The absolute cohomology space is isomorphic to $\mathcal{H}^p_{N}(M)=\{\omega \in \Omega^p (M) \,| \, d\omega=\delta \omega =0, \nu \lrcorner \,\omega=0\}$. 
We also denote by $\mathcal{H}^p(\partial M)$ the space of harmonic $p$-forms on $\partial M$. Finally, for $\omega, \omega' \in H^1(M)$, the Dirichlet integral is defined by
\begin{equation}
    \label{Dirichlet_integral}
    \mathcal{D}(\omega,\omega')=\int_M (\<  d\omega,d\omega'\> + \<  \delta \omega,\delta\omega'\>) \, d \mu_g.
\end{equation}
Similarly, we denote by $\mathcal{D}^{\partial M}$ the Dirichlet integral on the boundary. 

Now, let 
\begin{equation}
  Z:=\{\omega \in \Omega^p(M) \mid \Delta \omega =0 \;\textrm{on}
\;M\textrm{ and } \nu \lrcorner \, \omega=0 \;\textrm{on}
\;\partial M\}.  
\end{equation}
We define the following inner product on $Z$: for all $\omega, \omega'\in Z$, 
\begin{equation}
\label{scalar_product}
    (\omega, \omega')_V= (\omega, \omega')_{H^1(M)} + (\iota^* \omega, \iota^* \omega')_{H^1(\partial M)},
\end{equation} 
and we denote by $V$ the completion of $Z$ with respect to the inner product $(.,.)_V$.
\begin{remark}
\label{properties_V}
Let $f \in L^2(M)$ be a $p$-form on $M$. Recall that $\omega \in H^1(M)$ is a weak solution to $\Delta \omega =f$ on $M$ if 
\begin{equation}\label{eq:defweaksol}
\mathcal{D}(\omega,\omega') =\left(f , \omega'\right)_ {
L^2(M)} \quad \forall \, \omega'\in H_0^1(M).
\end{equation}
We have the following inclusion:
 $$V \subset \left\{\omega \in H^1(M) \mid \iota^* \omega \in H^1(\partial M), \, \nu \lrcorner \, \omega =0 \textrm{
on } \partial M, \textrm{ and }
\Delta \omega =0 \textrm{ weakly on }M\right\}.$$
\end{remark}
A key tool is the partial integration formula that will be useful in the sequel: for all $\omega \in \Omega^p(M)$ and $\omega' \in  \Omega^{p+1}(M)$
\begin{equation*}
    \int_M \<  d \omega , \omega' \> \, d \mu_g =\int_M \<  \omega , \delta \omega' \> \, d \mu_g -\int_{\partial M} \<  \iota^* \omega, \nu \lrcorner \,\omega' \> \, d \mu_g.
\end{equation*} 
As a consequence, we obtain for all $\omega, \omega'\in \Omega^p(M)$
\begin{equation}
\label{partial_integration_laplacian}
    \int_M\< \Delta\omega,\omega'\> \, d\mu_g  =  \mathcal{D}(\omega,\omega')+\int_{\partial M}\Big(\<  \nu\lrcorner \, d\omega,\iota^*\omega'\>-\< \iota^*\delta\omega,\nu\lrcorner\,\omega'\>\Big) d\mu_g.
\end{equation}

We will also use Gaffney's inequality \cite[Cor. 2.1.6]{Sc}: there exists $C >0$ such that
\begin{equation}
    \label{ineq:Gaffney}
   \|\omega\|_{H^1(M)}^2 \le C \left(\mathcal{D}(\omega, \omega)+ \|\omega\|_{L^2(M)}^2 \right) \quad \forall \omega \in {H}^1 (M).
\end{equation}
Note that this result applies in particular to differential forms on the boundary $\partial M$, that is, there exists $c >0$ such that
\begin{equation}
    \label{ineq:Gaffney_partial_M}
    \|\phi\|_{H^1(\partial M)}^2 \le c \left( \mathcal{D}^{\partial M}(\phi, \phi) + \|\phi\|_{L^2(\partial M)}^2 \right)  \quad \forall \phi \in {H}^1(\partial M).
\end{equation}

\begin{thm}
\label{thm:coerc_abeta}
    Let $\beta$ be a positive real number. The bilinear form $a_\beta$ defined on the Hilbert space $V$ by:
    \begin{equation}
    \label{a_beta}
    a_\beta \, (\omega, \omega') = \mathcal{D}(\omega,\omega') + \beta \, \mathcal{D}^{\partial M}(\iota^* \omega,\iota^*\omega') + (\iota^* \omega,\iota^*\omega')_{L^2(\partial M)} \quad \forall \, \omega, \omega'\in V,
    \end{equation}
    is bounded and coercive on $V$. 
\end{thm}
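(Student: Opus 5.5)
Boundedness is immediate. By Cauchy--Schwarz each of the three terms in $a_\beta(\omega,\omega')$ is bounded by a constant times $\|\omega\|_V\|\omega'\|_V$: indeed $|\mathcal{D}(\omega,\omega')|\le C\|\omega\|_{H^1(M)}\|\omega'\|_{H^1(M)}$ since $|d\omega|,|\delta\omega|\le C|\nabla\omega|$ pointwise, and similarly for the boundary terms with $\|\iota^*\omega\|_{H^1(\partial M)}$. So $|a_\beta(\omega,\omega')|\le C(1+\beta)\|\omega\|_V\|\omega'\|_V$. By density it is enough to argue on $Z$ and then pass to the completion $V$, since both sides are continuous for $\|\cdot\|_V$.

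For coercivity I would first handle the boundary part with Gaffney's inequality \eqref{ineq:Gaffney_partial_M}:
$$\|\iota^*\omega\|_{H^1(\partial M)}^2\le c\big(\mathcal{D}^{\partial M}(\iota^*\omega,\iota^*\omega)+\|\iota^*\omega\|^2_{L^2(\partial M)}\big)\le c\max(1,\beta^{-1})\big(\beta\mathcal{D}^{\partial M}+\|\iota^*\omega\|^2_{L^2(\partial M)}\big).$$
For the interior part, \eqref{ineq:Gaffney} gives $\|\omega\|^2_{H^1(M)}\le C(\mathcal{D}(\omega,\omega)+\|\omega\|^2_{L^2(M)})$. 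It therefore suffices to prove the trace-type estimate
$$\|\omega\|^2_{L^2(M)}\le K\big(\mathcal{D}(\omega,\omega)+\|\iota^*\omega\|^2_{L^2(\partial M)}\big)\qquad\forall\,\omega\in V. \tag{$*$}$$
Combining the three inequalities then gives $a_\beta(\omega,\omega)\ge \alpha\|\omega\|_V^2$.

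The main obstacle is $(*)$. Note that $\nu\lrcorner\,\omega=0$ together with $\iota^*\omega$ controls the full boundary value of $\omega$. I would prove $(*)$ by contradiction with a compactness argument. Suppose there are $\omega_k\in V$ with $\|\omega_k\|_{L^2(M)}=1$ and $\mathcal{D}(\omega_k,\omega_k)+\|\iota^*\omega_k\|^2_{L^2(\partial M)}\to0$. By \eqref{ineq:Gaffney} the sequence is bounded in $H^1(M)$. By Rellich it has a subsequence converging weakly in $H^1$ and strongly in $L^2(M)$, and by compactness of the trace $H^1(M)\to L^2(\partial M)$ also strongly on the boundary. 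The limit $\omega$ then satisfies $\|\omega\|_{L^2}=1$, $d\omega=\delta\omega=0$ (by weak lower semicontinuity of $\mathcal{D}$), and on $\partial M$ both $\nu\lrcorner\,\omega=0$ (the condition is closed under trace convergence) and $\iota^*\omega=0$. So $\omega$ is a harmonic field with zero trace. By unique continuation for $d+\delta$ (equivalently, by extending $\omega$ by zero across the boundary, which stays in $H^1$ and weakly satisfies $(d+\delta)\omega=0$ on a larger manifold, then applying Aronszajn's theorem), $\omega\equiv0$, which contradicts $\|\omega\|_{L^2}=1$. Alternatively, $\omega\in H^1_0(M)$ with $\mathcal{D}(\omega,\omega)=0$, and the Dirichlet Laplacian has no kernel; this gives a cleaner justification via the positivity of $\mathcal{D}$ on $H^1_0$, using Gaffney's inequality on $H^1_0$ together with the fact that $\mathcal{H}_D^p$ harmonic fields with full vanishing trace are zero. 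I would use this second route.
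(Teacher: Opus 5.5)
Your proposal is correct and follows essentially the paper's route. Both arguments combine Gaffney's inequalities on $M$ and on $\partial M$ with a compactness argument whose limit is a harmonic field vanishing on $\partial M$, hence zero. You run it as a contradiction proof of the $\beta$-independent estimate $(*)$ rather than minimizing $a_\beta$ over the $L^2(M)$-unit sphere of $V$, and you are more explicit than the paper about why that limit must vanish.

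Keep your first justification (zero extension across $\partial M$ plus Aronszajn's unique continuation), because the second is not independent of it. ``The Dirichlet Laplacian has no kernel'' and ``$\mathcal{D}$ is positive on $H^1_0(M)$'' only restate what must be shown. Gaffney's inequality yields no positivity by itself. And $\mathcal{H}^p_D(M)\cap\mathcal{H}^p_N(M)=\{0\}$ is itself proved by unique continuation.
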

{\it Proof.} The continuity of $a_\beta$ is obvious.  Moreover, the bilinear form $a_\beta$ is positive definite on $V$. Indeed, if $a_\beta(\omega,\omega)=0$ for some $\omega \in V$, then $d\omega =\delta\omega=0$ on $M$ and $\iota^*\omega=0$ on $\partial M$. Combined with the property $\nu \lrcorner \, \omega =0$, this implies that $\omega=0$. Now, applying  \eqref{ineq:Gaffney} and \eqref{ineq:Gaffney_partial_M} to $\omega \in V$ and $\iota^* \omega$ respectively, and then summing, we obtain:
\begin{eqnarray}
\label{ineq:Gaffney_a_beta}
    \|\omega\|_V^2 &\le & C\left(\mathcal{D}(\omega, \omega)+ \|\omega\|_{L^2(M)}^2 + \mathcal{D}^{\partial M}(\iota^* \omega,\iota^*\omega) + \|\iota^* \omega\|_{L^2(\partial M)}^2 \right) \nonumber\\
     &\le & C\left(a_\beta(\omega, \omega) + \|\omega\|_{L^2(M)}^2\right),
    \end{eqnarray}
for some constant $C>0$. Now, let
$$S=\{\omega \in V \mid \|\omega\|_{L^2(M)}=1\}.$$
The set $S$ is closed in $V$ by the continuity of the natural inclusion map $V \rightarrow L^2(M)$. Therefore, there exists a sequence $(\eta_i)_{i\in \mathbb{N}}$ in $S$ such that 
$a_\beta (\eta_i,\eta_i) $ converges to $\displaystyle\inf_{\omega \in S} a_\beta(\omega, \omega)$ as $i \to \infty$. By \eqref{ineq:Gaffney_a_beta}, the sequence $(\eta_i)_{i\in \mathbb{N}}$ is bounded in the Hilbert space $V$, hence, it admits a subsequence $(\eta_{i_k})_{k\in \mathbb{N}}$, weakly convergent to some $\eta$ in $V$. Note that the inclusion map $V \rightarrow L^2(M)$ is compact by the Rellich-Kondrachov theorem. Hence, there exists a subsequence $(\eta_{i_l})_{l\in \mathbb{N}}$ of $(\eta_{i_k})_{k\in \mathbb{N}}$
strongly convergent to some $\eta'$ in $L^2(M)$. Therefore, $\eta'$ must be equal to $\eta$, so $\|\eta\|_{L^2(M)}=1$, implying that $a_\beta(\eta, \eta) >0$. Consequently, 
$$a_\beta (\omega,\omega) \ge a_\beta (\eta,\eta) \, \|\omega\|_{L^2(M)}^2 \quad \forall \omega \in V.$$
Substituting the previous inequality into \eqref{ineq:Gaffney_a_beta}, we get for every $\omega \in V$
$$\|\omega\|_V^2 \le C \left( a_\beta(\omega , \omega) +\frac{a_\beta(\omega , \omega)}{a_\beta (\eta,\eta)}\right),$$
from which the coercivity of $a_\beta$ follows. 
\hfill$\square$

\vspace{0.3em}

For $\phi \in L^2(\partial M)$, the linear form $b_\phi$ defined on $V$ by:
$$b_\phi (\omega)= \int_{\partial M} \<  \phi, \iota^* \omega \> \, d \mu_g, \quad \forall \omega \in V,$$
is obviously continuous on $V$. Hence, for all $\phi \in L^2(\partial M)$, the Lax-Milgram Lemma ensures the existence of a unique $\eta=\eta_\phi \in V$ such that
\begin{equation}
    \label{variat_sol_Lax_Milgram}
    a_\beta (\eta_\phi, \omega) =b_\phi(\omega), \quad \forall \omega \in V.
\end{equation}
Furthermore, the map $I: \phi \longmapsto \eta_\phi$ is linear, continuous and injective from $L^2(\partial M)$ to $V$. To see the injectivity, assume $\eta_\phi=0$ for some $\phi \in L^2(\partial M)$, then for every $\omega \in V$, $\int_{\partial M} \<  \phi, \iota^* \omega \> \, d \mu_g=0$. Now, for any $\psi \in \Omega^p(\partial M)$, the harmonic tangential extension of $\psi$ belongs to $V$, so $\Omega^p(\partial M) \subset \iota^*(V)$. As a consequence, we deduce that $\iota^*(V)$ is dense in $L^2(\partial M)$. It follows that $\phi=0$. We then define the map $K : V \longrightarrow V$ by $K:= I \circ T \circ i$, where $T : H^1(M) \rightarrow L^2(\partial M)$ is the trace map and $i$ is the inclusion map of $V$ into $H^1(M)$. In particular, $K$ is injective as a consequence of $I$ being injective. Remark that $K(\omega)=\eta_{\iota^* \omega}$
for $\omega \in V$. 
\begin{prop}
    The operator $K$ is compact, self-adjoint and positive definite with respect to  $a_\beta$. 
\end{prop}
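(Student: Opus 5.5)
The plan is to read off all three properties from the defining identity \eqref{variat_sol_Lax_Milgram}. With $\phi=\iota^*\omega$, the definition of $K$ gives
\begin{equation*}
a_\beta(K\omega,\omega')=\int_{\partial M}\<\iota^*\omega,\iota^*\omega'\>\,d\mu_g=(\iota^*\omega,\iota^*\omega')_{L^2(\partial M)}\quad\forall\,\omega,\omega'\in V.
\end{equation*}
By Theorem~\ref{thm:coerc_abeta}, $a_\beta$ is a bounded coercive symmetric form on $V$, so it is an inner product on $V$ whose norm is equivalent to $\|\cdot\|_V$. It is therefore legitimate to speak of compactness, self-adjointness and positivity with respect to $a_\beta$.

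\emph{Self-adjointness.} The right-hand side above is symmetric in $\omega$ and $\omega'$. Using the symmetry of $a_\beta$, we get $a_\beta(K\omega,\omega')=a_\beta(K\omega',\omega)=a_\beta(\omega,K\omega')$.

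\emph{Positive definiteness.} Taking $\omega'=\omega$ gives $a_\beta(K\omega,\omega)=\|\iota^*\omega\|^2_{L^2(\partial M)}\ge 0$. Suppose this vanishes. Then $\iota^*\omega=0$, so $K\omega=\eta_0=0$, and the injectivity of $K$ established above forces $\omega=0$.

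\emph{Compactness.} This is the main step. Since $K=I\circ T\circ i$ with $I$ continuous from $L^2(\partial M)$ to $V$, it suffices to show that $T\circ i:V\to L^2(\partial M)$ is compact. The inclusion $i:V\to H^1(M)$ is continuous, and the trace $T:H^1(M)\to L^2(\partial M)$ is compact, because it factors as the bounded trace $H^1(M)\to H^{1/2}(\partial M)$ followed by the compact embedding $H^{1/2}(\partial M)\hookrightarrow L^2(\partial M)$ on the closed manifold $\partial M$. Alternatively, one can avoid fractional spaces altogether by using the boundary part of the $V$-norm. Since $\|\iota^*\omega\|_{H^1(\partial M)}\le\|\omega\|_V$, the map $\omega\mapsto\iota^*\omega$ is bounded from $V$ into $H^1(\partial M)$. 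The Rellich--Kondrachov theorem on the compact manifold $\partial M$ then makes $H^1(\partial M)\hookrightarrow L^2(\partial M)$ compact.

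The one point to check is that this second route computes the same map. The tangential part $\iota^*\omega$ of the full trace $T\omega$ agrees with the $H^1(\partial M)$ component of $\omega$ in $V$, and the normal part $\nu\lrcorner\,\omega$ vanishes on $V$, so $T\omega$ is identified with $\iota^*\omega$. This identification holds on the dense subspace $Z$ and passes to the completion by continuity of both maps into $L^2(\partial M)$. Consequently, bounded sequences in $V$ have subsequences whose traces converge in $L^2(\partial M)$, and their images under the continuous map $I$ converge in $V$. This gives compactness of $K$.
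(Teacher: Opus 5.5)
Your proposal is correct and follows the paper's own proof. As in the paper, self-adjointness and positive definiteness come from the identity $a_\beta(K\omega,\omega')=(\iota^*\omega,\iota^*\omega')_{L^2(\partial M)}$ together with the injectivity of $K$, and compactness comes from the trace into $H^{1/2}(\partial M)$ followed by the compact embedding into $L^2(\partial M)$; your alternative compactness route through the $H^1(\partial M)$ part of the $V$-norm is an equally valid, slightly more elementary variant.
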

{\it Proof.} First, $K$ is compact because the inclusion from $H^{\frac{1}{2}}(\partial M)$ to $L^2(\partial M)$ is compact according to the Rellich-Kondrachov theorem. Moreover, $K$ is self-adjoint with respect to $a_\beta$. Indeed, by \eqref{variat_sol_Lax_Milgram}, we have for $\omega, \omega' \in V$  
\begin{equation}
    \label{K}
a_\beta(K(\omega), \omega')
=\int_{\partial M} \<  \iota^* \omega, \iota^* \omega'\> \, d \mu_g.\end{equation}
The previous identity immediately implies that $a_\beta(K(\omega),\omega) \ge 0$ for all $\omega$, with equality if and only if $\iota^* \omega=0$. In particular, equality yields  $K(\omega)=0$, and then the injectivity of $K$ ensures that $\omega=0$. 
\hfill$\square$ 

\vspace{0.3em}

In the following, we shall use the properties of the operator $K$ introduced above to study the spectrum of the Wentzell operator by relating it to that of $K$. We note that the Wentzell operator can be defined on the space $\iota^* V$.
\begin{thm}\label{thm:wentbc} Let $(M^{n+1},g)$ be a compact Riemannian manifold with smooth boundary $\partial M$ and let $\nu$ be the inward unit  vector field normal to the boundary. Let $\beta$ be a positive real number. Then, for $p=0,1, \dots, n$, 
the problem $\mathcal{L}^{[p]}_\beta  \omega =\lambda \omega$ has a discrete spectrum consisting of an unbounded nondecreasing sequence of nonnegative eigenvalues of finite multiplicities $$\lambda_{1,p, \beta}(M) \le \lambda_{2,p,\beta}(M) \le \dots.$$
Its kernel is given by $\mathcal{H}^p(\partial M) \cap \iota^* (\mathcal{H}^p_N(M))$. Moreover, the first eigenvalue $\lambda_{1,p, \beta}(M)$ of the Wentzell-Laplace operator is characterized as follows:
      \begin{equation}
          \label{variat_char}         \lambda_{1,p,\beta}(M)=\inf \left\{ \dfrac{\|d \omega\|^2_{L^2(M)}+\|\delta \omega\|^2_{L^2(M)}+\beta \, \left(\|d^{\partial M} \iota^* \omega\|^2_{L^2(\partial M)}+\|\delta^{\partial M} \iota^* \omega\|^2_{L^2(\partial M)}\right)}{\|\iota^* \omega\|^2_{L^2(\partial M)}} \right\},
      \end{equation}
      where the infimum is taken on all $p$-forms $\omega \in \Omega^p(M)\setminus\{0\}$ such that $\nu \lrcorner\, \omega =0$. This infimum is attained precisely when $\iota^*\omega$ is an eigenform of \eqref{wentzell_operator} associated with $\lambda_{1,p,\beta}(M)$.
\end{thm}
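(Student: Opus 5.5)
The approach is to use the spectral theorem for the compact, self-adjoint, positive definite operator $K$ on the Hilbert space $(V,a_\beta)$ given by the preceding Proposition, and to transfer its spectrum to $\mathcal{L}^{[p]}_\beta$. Since $V$ is infinite dimensional and $K$ is injective, $K$ has a sequence of positive eigenvalues $\mu_1\ge\mu_2\ge\dots\to 0$ of finite multiplicity, with eigenvectors $\omega_k$ forming an $a_\beta$-orthonormal basis of $V$. Define $\lambda_k:=\mu_k^{-1}-1$. The key identity is that $K\omega=\mu\omega$ is equivalent, by \eqref{K}, to
\[
\mathcal{D}(\omega,\omega')+\beta\,\mathcal{D}^{\partial M}(\iota^*\omega,\iota^*\omega')=\Big(\tfrac1\mu-1\Big)\int_{\partial M}\<\iota^*\omega,\iota^*\omega'\>\,d\mu_g\qquad\forall\,\omega'\in V .
\]
This is the weak form of the Wentzell problem. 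Because the left side is nonnegative, $\lambda_k\ge 0$. The sequence $\lambda_k$ is nondecreasing and unbounded, and each $\lambda_k$ has finite multiplicity.

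Next I identify this weak problem with $\mathcal{L}^{[p]}_\beta\phi=\lambda\phi$ on $\phi=\iota^*\omega$. For smooth $\omega\in Z$ and $\omega'=\hat\psi$, I apply \eqref{partial_integration_laplacian}. Since $\Delta\omega=0$ and $\nu\lrcorner\,\omega'=0$, this gives $\mathcal{D}(\omega,\omega')=-\int_{\partial M}\<\nu\lrcorner\, d\omega,\iota^*\omega'\>=\int_{\partial M}\<\mathcal{T}^{[p]}\iota^*\omega,\iota^*\omega'\>$, because $\omega$ is the harmonic tangential extension of $\iota^*\omega$ by uniqueness. Integrating by parts on the closed manifold $\partial M$ gives $\mathcal{D}^{\partial M}(\phi,\psi)=\int_{\partial M}\<\Delta^{\partial M}\phi,\psi\>$. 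Together these show that the weak equation is $\int_{\partial M}\<\mathcal{L}^{[p]}_\beta\phi-\lambda\phi,\psi\>=0$ for all $\psi$, and density of $\iota^*V$ in $L^2(\partial M)$ then gives the strong equation.

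The main obstacle is regularity. Eigenvectors of $K$ lie only in the completion $V$, so I must show they are smooth, and in particular that $\iota^*\omega$ is smooth and $\omega=\widehat{\iota^*\omega}$. My first attempt will be elliptic regularity for the operator $\mathcal{T}^{[p]}+\beta\Delta^{\partial M}$ on $\partial M$. This operator is elliptic of order two, since $\mathcal{T}^{[p]}$ is a first-order pseudodifferential operator and $\beta>0$. A weak $H^1$ solution $\phi$ of $(\mathcal{T}^{[p]}+\beta\Delta^{\partial M})\phi=\lambda\phi$ is therefore smooth by bootstrapping. Then $\omega$ is the weak harmonic extension with $\nu\lrcorner\,\omega=0$, and it coincides with $\hat\phi$ by uniqueness in \cite{Sc}. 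Conversely, every smooth eigenform $\phi$ gives $\hat\phi\in Z$ with $K\hat\phi=(1+\lambda)^{-1}\hat\phi$. Hence the spectra coincide, multiplicities included, because $\omega\mapsto\iota^*\omega$ is injective on $V$.

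For the kernel: $\lambda=0$ forces $d\omega=\delta\omega=0$, so $\omega\in\mathcal{H}^p_N(M)$, and also $d^{\partial M}\phi=\delta^{\partial M}\phi=0$. This gives $\phi\in\mathcal{H}^p(\partial M)\cap\iota^*\mathcal{H}^p_N(M)$. Conversely, if $\phi=\iota^*\omega$ with $\omega\in\mathcal{H}^p_N(M)$ and $\phi$ harmonic, then $\omega=\hat\phi$ and $\mathcal{T}^{[p]}\phi=0$, so $\mathcal{L}^{[p]}_\beta\phi=0$.

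For \eqref{variat_char}, the min-max for $K$ gives $\mu_1=\sup_{V}\|\iota^*\omega\|^2_{L^2(\partial M)}/a_\beta(\omega,\omega)$. Equivalently, $\lambda_1$ is the infimum of the stated Rayleigh quotient over $V$, and the infimum is attained exactly on the first eigenspace. It remains to pass from $V$ to all smooth $\omega$ with $\nu\lrcorner\,\omega=0$. For such $\omega$, let $\phi=\iota^*\omega$ and compare with $\hat\phi$. The boundary terms agree, while $\mathcal{D}(\omega,\omega)\ge\mathcal{D}(\hat\phi,\hat\phi)$: applying \eqref{partial_integration_laplacian} to $\hat\phi$ and $\omega-\hat\phi$ gives $\mathcal{D}(\hat\phi,\omega-\hat\phi)=0$, since $\iota^*(\omega-\hat\phi)=0$ and $\nu\lrcorner(\omega-\hat\phi)=0$. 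Equality holds iff $\omega-\hat\phi$ is $d$- and $\delta$-closed with vanishing tangential and normal parts, which forces $\omega=\hat\phi$. Density of $Z$ in $V$ shows that the two infima coincide. Equality in the quotient therefore holds precisely when $\omega=\hat\phi$ with $\phi$ a first eigenform.
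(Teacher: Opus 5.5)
Your proposal is correct and follows the paper's proof essentially step by step. The shared steps are the spectral theorem for $K$ on $(V,a_\beta)$ with $\lambda_k=\mu_k^{-1}-1$, the identification of the weak equation with $\mathcal{L}^{[p]}_\beta\phi=\lambda\phi$ via \eqref{partial_integration_laplacian}, smoothness from the ellipticity of $\mathcal{T}^{[p]}+\beta\Delta^{\partial M}$, the converse via $\hat\phi$, the kernel from the energy identity, and min-max for $K$. The only variation is the final passage to smooth $\omega$ with $\nu\lrcorner\,\omega=0$: instead of the splitting $H=V\oplus H^1_0(M)$ of Lemma~\ref{lem_splitting}, you use the identity $\mathcal{D}(\omega,\omega)=\mathcal{D}(\hat\phi,\hat\phi)+\mathcal{D}(\omega-\hat\phi,\omega-\hat\phi)$, which is the same orthogonality at the smooth level and yields the precise equality case $\omega=\hat\phi$ with $\phi$ a first eigenform (which is sharper than the statement's "$\iota^*\omega$ is an eigenform", a condition that alone does not suffice).
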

{\it Proof.} 
As the operator $K$ given by \eqref{K} is a compact, self-adjoint and positive definite operator on the Hilbert space $V$ equipped with the inner product $a_\beta$, the spectral theorem ensures the existence of a Hilbert orthonormal basis $(\omega_i)_{i\ge 1}$ of $V$ and a nonincreasing sequence of positive real numbers $(\mu_i)_{i\ge 1}$ satisfying $\mu_i \to 0$ as $i \to \infty$, and such that $K\omega_i = \mu_i \omega_i$ for each $i\ge 1$. \\\\
For each $i \ge 1$ and 
$\omega \in V$, we have by \eqref{K}
\begin{equation}
\label{eigen_K}
    a_\beta(\omega_i, \omega)=  \dfrac{1}{\mu_i}\, ( \iota^* \omega_i , \iota^* \omega )_{L^2(\partial M)}.
\end{equation}
Therefore, 
\begin{equation}
    \label{eq:eigweakW}
\mathcal{D}(\omega_i,
\omega)+\beta\,\mathcal{D}^{\partial M} (\iota^*\omega_i,\iota^*\omega)
=\left(\frac{1}{\mu_i}-1\right)\left(\iota^*\omega_i,\iota^*\omega\right)_{L^2(\partial M)} \quad \forall \, \omega \in V.\end{equation}

In addition, using Equation \eqref{partial_integration_laplacian}, we have for every $\omega\in Z$,
\begin{eqnarray}\label{eq:Deltaomegaiomega}\medskip
\nonumber\mathcal{D}(\omega_i,
\omega)&=&\big(\underbrace{\Delta\omega_i}_{0},\omega\big)_{L^2(M)}-\big(\nu\lrcorner \,
d\omega_i,\iota^*\omega\big)_{L^2(\partial
M)}+\big(\iota^*\delta\omega_i,\underbrace{\nu\lrcorner\, \omega}_{0}\big)_{L^2(\partial M)}\\ \medskip
&=&-\left(\nu\lrcorner \,
d\omega_i,\iota^*\omega\right)_{L^2(\partial
M)}.
\end{eqnarray}
Also, the identity
\begin{eqnarray}\label{eq:Deltaboundomegaiomega}
\mathcal{D}^{\partial M}(\iota^*\omega_i,
\iota^*\omega)&=&\left(\Delta^{\partial M}(\iota^*\omega_i),\iota^*\omega\right)_{L^2(\partial M)},
\end{eqnarray}
holds for every $\omega \in Z$.
Note that $\omega_i \in H^{3/2}(M)$ by the elliptic regularity of the Dirichlet boundary conditions for the Poisson equation,  so we see $\Delta\omega_i$ and $\Delta^{\partial M}(\iota^*\omega_i)$ as
elements in $H^{-1/2}(M)$ and $H^{-1}(\partial M)$ respectively.
Substituting \eqref{eq:Deltaomegaiomega} and \eqref{eq:Deltaboundomegaiomega} into \eqref{eq:eigweakW}, we deduce
$$-\left(\nu\lrcorner \,
d\omega_i,\iota^*\omega\right)_{L^2(\partial
M)}+ \beta \, \left(\Delta^{\partial M}(\iota^*\omega_i),\iota^*\omega\right)_{L^2(\partial M)}=\left(\frac{1}{\mu_i}-1\right)\left(\iota^*\omega_i,\iota^*\omega\right)_{L^2(\partial M)}
$$
for all $\omega\in Z$. 
Since the map $Z\to \Omega^p(\partial M)$,
$\omega\mapsto \iota^* \omega$ is continuous w.r.t.
$\|\cdot\|_V$ and
$\|\cdot\|_{L^2(\partial M)}$ and is bijective, we conclude that $\iota^* \omega_i$ is an eigenform of the Wentzell problem associated with the eigenvalue $\lambda_i=1-\frac{1}{\mu_i}$. 
Note that $\omega_i$ is smooth as the operator $\mathcal{L}^{[p]}_\beta$ is an elliptic pseudo-differential operator, being the sum of a first-order pseudo-differential operator and an elliptic differential operator of order 2 \cite[Chap. 7, Prop. 4.1]{TaylorPDEII}. \\\\
Conversely, let $\lambda$ be a nonnegative real number and $\phi$ a $p$-form on $\partial M$ satisfying  \eqref{wentzell_operator}.  
For $\psi \in \Omega^p(\partial M)$, denote by $\hat{\phi}$ and $\hat{\psi}$ the harmonic tangential extensions of $\phi$ and $\psi$, respectively. 
Applying Equation \eqref{partial_integration_laplacian} to $\hat{\phi}$ and $\hat{\psi}$, we obtain
$$- \int_{\partial M} \<  \nu \lrcorner \, d \hat{\phi} , \psi \> \, d \mu_g=\mathcal{D}(\hat{\phi} , \hat{\psi}) .$$
Similarly, the partial integration formula in $\partial M$ gives 
$$\int_{\partial M} \<  \Delta^{\partial M} \phi , \psi \> \, d \mu_g=\mathcal{D}^{\partial M}({\phi} , {\psi}) .$$
Substituting the previous equalities in the expression of $\mathcal{L}^{[p]}_\beta$ yields
\begin{eqnarray}
\label{wentselfadjoint}
    \int_{\partial M} \<  \mathcal{L}^{[p]}_\beta  \phi , \psi \> \, d \mu_g 
      = \mathcal{D}(\hat{\phi} , \hat{\psi})  
       + \beta \, \mathcal{D}^{\partial M}({\phi} , {\psi}). 
\end{eqnarray}
Thus, 
$\mathcal{D}(\hat{\phi}, \hat{\psi}) + \beta \, \mathcal{D}^{\partial M} (\phi, \psi)=\lambda \, (\phi, \psi)_{L^2(\partial M)}$,
 or equivalently $a_\beta(\hat{\phi}, \hat{\psi})=(\lambda+1)\, (\phi, \psi)_{L^2(\partial M)}$. Using once again the continuity and bijectivity of the map $\iota^* : (Z, \|.\|_V) \rightarrow (\Omega^p(\partial M), \|.\|_{L^2(\partial M)})$, we find that  
 $a_\beta(\hat{\phi},\omega)= (\lambda+1) ( \phi , \iota^* \omega)_{L^2(\partial M)}$ for all $\omega\in V$. It follows from \eqref{K} that $a_\beta(\hat{\phi},\omega)= (\lambda+1) \, a_\beta(K(\hat{\phi}),\omega)$ for all $\omega \in V$ which means that $\hat{\phi}$ is an eigenform of $K$ associated with the eigenvalue $\dfrac{1}{\lambda +1}$. We have thus proved the first part of the theorem.
\\\\
Next, we determine the kernel of $\mathcal{L}^{[p]}_\beta$. Setting $\psi=\phi$ in \eqref{wentselfadjoint} gives: 
\begin{equation}
\label{eq:Lphiphi}
      \int_{\partial M} \<  \mathcal{L}^{[p]}_\beta  \phi , \phi \> \, d \mu_g =\mathcal{D}(\hat{\phi} , \hat{\phi})  
       + \beta \, \mathcal{D}^{\partial M}({\phi} , {\phi}) \quad \forall \, \phi \in \Omega^p(\partial M).
\end{equation}
Then, a $p$-form $\phi$ belongs to the kernel of $\mathcal{L}^{[p]}_\beta$ if and only if $d^{\partial M} \phi= \delta^{\partial M} \phi =0$ and $d \hat{\phi}=\delta \hat{\phi}=0$. The desired equality then follows, as $\nu \lrcorner \, \hat{\phi}=0$.
\\\\
%
%
%
%
%
%
%
To prove the last part of the theorem, we consider the space $H$ defined by \eqref{H}. 
It is straightforward that the inner product $(.,.)_V$ is well defined on $H$. Moreover, $H$ is a Hilbert space with respect to this inner product. It is also clear that $a_\beta$ defines an inner product on $H$. We need the following lemma:
\begin{lemma}
\label{lem_splitting}
    The Hilbert space $H$ admits the following $a_\beta$-orthogonal direct sum decomposition:
    \begin{equation}
        \label{dec_H}
        H=V\oplus H^1_0(M).
    \end{equation}
\end{lemma}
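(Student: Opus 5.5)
We prove Lemma~\ref{lem_splitting} in three steps: check that both summands lie in $H$ and are $a_\beta$-orthogonal, show that their sum is all of $H$, and conclude that the sum is direct and closed.

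\emph{Containment and orthogonality.} $V \subset H$ follows from Remark~\ref{properties_V}, since every element of $V$ has tangential trace in $H^1(\partial M)$ and vanishing normal component. Also $H^1_0(M)\subset H$, because $\iota^*\omega=0$ and $\nu\lrcorner\,\omega=0$ for $\omega\in H^1_0(M)$. For $\omega\in V$ and $\eta\in H^1_0(M)$ we have $\iota^*\eta=0$, so the boundary terms of $a_\beta$ vanish and
\[
a_\beta(\omega,\eta)=\mathcal{D}(\omega,\eta).
\]
Since $\omega$ is weakly harmonic, definition~\eqref{eq:defweaksol} with $f=0$ gives $\mathcal{D}(\omega,\eta)=0$. (If one works from $Z$ first, this is $\int_M\<\Delta\omega,\eta\>=0$ in \eqref{partial_integration_laplacian}, with both boundary terms killed by $\iota^*\eta=0$ and $\nu\lrcorner\,\eta=0$; it then extends by density and continuity.) So $V\perp_{a_\beta} H^1_0(M)$, and in particular $V\cap H^1_0(M)=\{0\}$ by positive definiteness.

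\emph{Sum is all of $H$.} Let $\omega\in H$. I would subtract from $\omega$ the weakly harmonic form with the same boundary data. Set $\eta_0\in H^1_0(M)$ to be the minimiser of $\mathcal{D}(\omega-\eta,\omega-\eta)$ over $\eta\in H^1_0(M)$. This exists and is unique because $\mathcal{D}$ is coercive on $H^1_0(M)$: by Gaffney's inequality together with the fact that there are no nonzero harmonic fields with vanishing full trace, $\mathcal{D}$ is equivalent to the $H^1$ norm there, and Lax--Milgram applies. Then $v:=\omega-\eta_0$ satisfies $\mathcal{D}(v,\eta)=0$ for all $\eta\in H^1_0(M)$, so $v$ is weakly harmonic. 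Moreover $v$ has the same full boundary trace as $\omega$, hence $\iota^*v\in H^1(\partial M)$ and $\nu\lrcorner\,v=0$.

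\emph{Main obstacle: $v\in V$.} The genuinely delicate point is that $V$ is defined abstractly as a completion of $Z$. We need the reverse inclusion to Remark~\ref{properties_V}: any weakly harmonic $H^1$ form with $\nu\lrcorner\,v=0$ and $\iota^*v\in H^1(\partial M)$ is a $\|\cdot\|_V$-limit of elements of $Z$. I would argue as follows.
\begin{itemize}
\item Approximate $\phi:=\iota^*v$ in $H^1(\partial M)$ by smooth $\phi_k\in\Omega^p(\partial M)$.
\item Take their harmonic tangential extensions $\hat\phi_k\in Z$, solutions of \eqref{eq:bvpharmonictangext}.
\item Use the $H^{1/2}(\partial M)\to H^1(M)$ stability of this boundary value problem, from Schwarz's theory~\cite{Sc}, to show that $\hat\phi_k$ is Cauchy in $H^1(M)$. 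Their traces are Cauchy in $H^1(\partial M)$ by construction, so $\hat\phi_k$ is Cauchy in $V$ and converges to some $\tilde v\in V$.
\end{itemize}
Then $\tilde v-v$ is weakly harmonic with zero full trace, so $\tilde v-v\in H^1_0(M)$. By the uniqueness of the weak Dirichlet problem (the coercivity of $\mathcal{D}$ on $H^1_0$ used above), $\tilde v=v$. Hence $v\in V$ and $\omega=v+\eta_0$.

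\emph{Closedness and directness.} $H^1_0(M)$ is closed in $H$, since the $V$-norm restricts to the $H^1$ norm on it. $V$ is closed because it is complete. Combined with the orthogonality and trivial intersection established above, this gives the $a_\beta$-orthogonal decomposition \eqref{dec_H}.
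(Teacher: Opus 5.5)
Your proof is correct, but it takes a different route from the paper's. The paper never splits a given $\omega\in H$ explicitly. After checking $H^1_0(M)\subset V^\perp$ exactly as you do, it takes $\omega\in V^\perp$ and tests it against the $a_\beta$-orthonormal basis $(\omega_i)$ of eigenforms of $K$ from the proof of Theorem~\ref{thm:wentbc}. Integration by parts gives $a_\beta(\omega_i,\omega)=(\lambda_{i,p,\beta}+1)(\iota^*\omega_i,\iota^*\omega)_{L^2(\partial M)}$, so $\iota^*\omega$ is orthogonal to every $\iota^*\omega_i$. These traces are complete in $L^2(\partial M)$, so $\iota^*\omega=0$, and together with $\nu\lrcorner\,\omega=0$ this puts $\omega$ in $H^1_0(M)$. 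Hence $V^\perp=H^1_0(M)$, and the lemma is the orthogonal decomposition $H=V\oplus V^\perp$.

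That argument is short, but it depends on the spectral analysis of $K$ and on the density of $\iota^*V$ in $L^2(\partial M)$. Yours uses only elliptic theory for the Dirichlet problem: coercivity of $\mathcal{D}$ on $H^1_0(M)$ and the $H^{1/2}\to H^1$ bound for harmonic tangential extensions. It also produces the $V$-component explicitly, as the weakly harmonic form with the same boundary data. As a by-product it proves the reverse of the inclusion in Remark~\ref{properties_V}, i.e.\ that $V$ is exactly the set of weakly harmonic forms in $H$.

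One simplification is available. Once you have built $\tilde v\in V$ from $\iota^*\omega$, it has the same full trace as $\omega$, so $\omega-\tilde v$ already lies in $H^1_0(M)$. You can therefore drop the separate Dirichlet minimisation producing $\eta_0$ and the uniqueness step identifying $\tilde v$ with $v$.
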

{\it Proof.} Let $\omega \in H^1_0(M)$ and $\omega' \in  V$. Since $\Delta \omega' =0$ weakly on $M$, we have  $\mathcal{D}(\omega, \omega')=0$, yielding $$a_\beta(\omega, \omega')=\underbrace{\mathcal{D}(\omega,\omega')}_{0} + \beta \, \mathcal{D}^{\partial M}(\underbrace{\iota^* \omega}_{0},\iota^*\omega') + (\underbrace{\iota^* \omega}_{0},\iota^*\omega')_{L^2(\partial M)}=0.$$ 
This shows that $H^1_0(M) \subset V^\perp$. 
Conversely, let $\omega \in V^\perp$, then for each $i \ge 1$, $a_\beta (\omega_i, \omega)=0$. Therefore, for each $i \ge 1$, $\left(\mathcal{L}_\beta^{[p]} (\iota^* \omega_i), \iota^* \omega \right)_{L^2(\partial M)}+\left(\iota^* \omega_i, \iota^* \omega \right)_{L^2(\partial M)}=0$. It follows that, $(\lambda_{i,p,\beta}+1)\left(\iota^* \omega_i, \iota^* \omega \right)_{L^2(\partial M)}=0$. Thus, $\left(\iota^* \omega_i, \iota^* \omega\right)_{L^2(\partial M)}=0 $ for each $i \ge 1$, which implies that $\iota^* \omega'=0$. In addition, knowing that $\nu \lrcorner\, \omega=0$ because $\omega\in H$, we deduce that $\omega$ vanishes along the boundary, so it belongs to $H_0^1(M)$. 
This concludes the proof of the lemma.
\hfill$\square$

\vspace{0.3em}

%
%
We end the proof of Theorem \ref{thm:wentbc}. For every eigenform $\phi$ associated with a nonnegative eigenvalue $\lambda$ to the Wentzell operator, using \eqref{eq:Lphiphi}, we obtain $\mathcal{D}(\hat{\phi},\hat{\phi})+\beta \, \mathcal{D}^{\partial M}( \phi,\phi)=\lambda \, \|\phi \|_{L^2(\partial M)}^2$. Therefore, renaming the form $\hat{\phi}$ as $\omega$, where $\phi$ is an eigenform of the Wentzell operator associated with the eigenvalue $\lambda_{1,p,\beta}(M)$, we have 
\begin{equation}
\label{eigen_eigen}
     \lambda_{1,p,\beta}(M) = \dfrac{\mathcal{D}(\omega,\omega)+\beta \, \mathcal{D}^{\partial M}( \iota^* \omega,  \iota^* \omega)}{\|\iota^* \omega\|^2_{L^2(\partial M)}}.
\end{equation}
On the other hand, by the min-max principle, the first eigenvalue $\mu_1$ of $K$ is characterized by
$\mu_1=\sup_{\omega \in V\setminus\{0\}}\dfrac{a_\beta(K(\omega),\omega)}{a_\beta(\omega, \omega)}$.
Therefore, using \eqref{K}, we deduce that
$\mu_1 \ge \dfrac{\|\iota^* \omega\|^2_{L^2(\partial M)}}{a_\beta(\omega,\omega)}$,
for all $\omega \in V\setminus \{0\}$. Taking into account that $\lambda_{1,p,\beta}(M)=\dfrac{1}{\mu_1}-1$ obtained in the proof of the first part of the theorem, we deduce the inequality $\lambda_{1,p,\beta}(M) \le \dfrac{a_\beta(\omega,\omega)}{\|\iota^* \omega\|^2_{L^2(\partial M)}}-1$ for all nonzero $\omega$ in $V$, that is
\begin{equation}
    \label{ineq:eigen}
\lambda_{1,p,\beta}(M) \le \dfrac{\mathcal{D}(\omega,\omega)+\beta \, \mathcal{D}^{\partial M}( \iota^* \omega,  \iota^* \omega)}{\|\iota^* \omega\|^2_{L^2(\partial M)}} \quad \forall \, \omega \in V\setminus\{0\}.\end{equation}
Combining \eqref{eigen_eigen} and \eqref{ineq:eigen}, we get
      \begin{equation*}
               \lambda_{1,p,\beta}(M)=\inf_{\omega \in V\setminus\{0\}}  \dfrac{\mathcal{D}(\omega,\omega)+\beta \, \mathcal{D}^{\partial M}( \iota^* \omega,  \iota^* \omega)}{\|\iota^* \omega\|^2_{L^2(\partial M)}},
      \end{equation*}
      which is actually an infimum on $Z\setminus\{0\}$ because the eigenforms are smooth. Furthermore, 
the condition $\Delta \omega=0$ weakly on $M$ can be dropped, that is, the infimum can be taken on the smooth nonzero forms belonging to $H$. Indeed, let $\omega$ be a nonzero form in $H$. By Lemma \ref{lem_splitting}, $\omega=\omega_V + \omega_0 \in V \oplus H^1_0(M)$ and $a_\beta(\omega, \omega)=a_\beta(\omega_V,\omega_V) + a_\beta(\omega_0,\omega_0) \ge a_\beta(\omega_V, \omega_V)$. Therefore,
\begin{eqnarray*}
    \mathcal{D}(\omega,\omega)+\beta \, \mathcal{D}^{\partial M} (\iota^* \omega, \iota^* \omega) +\|\iota^* \omega\|_{L^2(\partial M)}^2 & \ge & \mathcal{D}(\omega_V,\omega_V)+\beta \, \mathcal{D}^{\partial M} (\iota^* \omega_V, \iota^* \omega_V) +\|\iota^* \omega_V\|_{L^2(\partial M)}^2.
\end{eqnarray*}
Using the fact that $\iota^* \omega=\iota^* \omega_V$, we deduce the result.     \hfill$\square$
\begin{remark}
    As the space $\{\iota^* \omega \mid \omega \in V\}$ is dense in $L^2(\partial M)$, we deduce from  \eqref{eigen_K} that the sequence $(\sqrt{\mu_i}\, \, \iota^* \omega_i)_{i \ge 1}$ is a Hilbert orthonormal basis of $L^2(\partial M)$, made of eigenforms of the Wentzell operator \eqref{wentzell_operator}.
\end{remark}
\begin{remark}
     The variational  formulation of the Wentzell eigenvalue problem \eqref{wentzell_operator} is expressed as: \\
 Find $(\lambda,\omega) \in \R \times (H\setminus \{0\})$ such that
  \begin{equation}
     \label{weak_solution_Wen}
     \mathcal{D}(\omega,\omega') + \beta \, \mathcal{D}^{\partial M}(\iota ^*\omega, \iota ^* \omega')  
   = \lambda \, \left(\iota ^* \omega,  \iota^* \omega'\right)_ {
L^2(\partial M)} \quad \forall \, \omega'\in H.
   \end{equation}
Indeed, we first note that any $\omega$ that satisfies \eqref{weak_solution_Wen} belongs to $V$ in view of Lemma \ref{lem_splitting}. In fact, from the proof of Theorem \ref{thm:wentbc}, the required identity holds for all $\omega' \in V$. Therefore, we conclude using once again the splitting \eqref{dec_H}.
\end{remark}
\begin{remark}
    In the case $p=0$, 
    the first positive eigenvalue of $\mathcal{L}^{[0]}_\beta=\mathcal{L}_\beta$  is $\lambda_{2,0,\beta}(M)$, according to our convention.
\end{remark}
We conclude this section by defining the dual Wentzell operator, following the same approach as that used in \cite{RS_DtN2012} to define the dual Dirichlet-to-Neumann operator. 
\begin{definition}
\label{def:dual_went}
    The dual Wentzell operator on $p$-forms on the boundary is defined as:
    \begin{equation}
        \label{dual_went}
        \mathcal{L}_{\beta,D}^{[p]}=(-1)^{p(n-p)} \star_{\partial M} \mathcal{L}_{\beta}^{[n-p]} \star_{\partial M}.
    \end{equation}
    Here $\star_{\partial M}$ denotes the Hodge star operator on the boundary.
\end{definition}
The dual operator defined above is equal to $\mathcal{T}^{[p]}_D +\beta \, \Delta^{\partial M}$, where $\mathcal{T}^{[p]}_D$ is the dual Dirichlet-to-Neumann operator defined as follows \cite{RS_DtN2012}: for $\phi \in \Omega^p(\partial M)$, $\mathcal{T}^{[p]}_D(\phi)=\iota^* (\delta \tilde{\phi})$, where  $\tilde{\phi}$ is the unique harmonic normal extension of $\phi$, that is, $\tilde{\phi}$ is the $(p+1)$-form on $M$ satisfying
$$
\left\{\begin{array}{lll}\Delta \tilde{\phi}&=0&\textrm{ on
}M\\ \iota^* \tilde{\phi}&=0&\textrm{ on
}\partial M\\
\nu \lrcorner \, \tilde{\phi}&=\phi &\textrm{ on
}\partial M.
\end{array}
\right.
$$
Thus, $\mathcal{L}_{\beta,D}^{[p]} (\phi)=\iota^* (\delta \tilde{\phi}) +\beta \, \Delta^{\partial M} \phi$. 
The eigenvalues $\lambda_{k,p,\beta}^D(M)$ of the dual Wentzell eigenvalue problem are given by
$$\lambda_{k,p,\beta}^D(M)=\lambda_{k,n-p,\beta}(M).$$
Moreover, the first eigenvalue is characterized by:
\begin{equation}
    \label{var_char_dual}
    \lambda_{1,p,\beta}^D(M)= \inf \left\{ \dfrac{\|d \omega\|^2_{L^2(M)}+\|\delta \omega\|^2_{L^2(M)}+\beta \,\left(\|d^{\partial M} (\nu \lrcorner\, \omega)\|^2_{L^2(\partial M)}+\|\delta^{\partial M} (\nu \lrcorner\, \omega)\|^2_{L^2(\partial M)}\right)}{\|\nu \lrcorner\, \omega\|^2_{L^2(\partial M)}} \right\},
\end{equation}
 where the infimum is taken on all $p$-forms $\omega \in \Omega^{p+1}(M)\setminus\{0\}$ such that $\iota^* \omega =0$. Moreover, this infimum is attained exactly for the eigenforms of \eqref{dual_went} associated with $\lambda_{1,p,\beta}^D(M)$. 

\begin{remark}
    When $\beta=0$, the variational characterizations \eqref{variat_char} and \eqref{var_char_dual} reduce to the corresponding characterizations of the Dirichlet-to-Neumann operator and its dual (see identities (7) and (9) in \cite{RS_DtN2012}).
\end{remark}


\section{The spectrum of the Wentzell operator on the Euclidean ball}
\label{Sec:Wentz_spectrum_Euclidean_ball}
In this section, we give the eigenvalues of the Wentzell operator acting on $p$-forms on the unit ball $\mathbb{B}^{n+1}$ in $\R^{n+1}$. They are directly obtained from the eigenvalues of the Dirichlet-to-Neumann operator on the ball performed in \cite{RS_DtN2014} and those of the Hodge Laplacian on the sphere \cite{GM}. The result is based on the fact that these two operators have the same eigenforms in each degree. We  state the result separately for the cases $p=0$, $1 \le p \le n-1$, and $p=n$. We prove the result for $p \ge 1$. The case $p=0$ is well known \cite{DKL2016}, and we give its statement for completeness.
\\
 For $k=0,1,2, \dots$, we denote 
by $H_{k,0}$ the vector space of  homogeneous polynomial harmonic functions of degree $k$, and let $M_{k,0}:=\dim H_{k,0}$. 
\begin{thm}
\label{spectrum_scal_Went_ball}
    The eigenvalues of the Wentzell operator $\mathcal{L}_\beta^{[0]}$ on the unit ball of $\R^{n+1}$ are given as follows:
    $$\lambda_{k, 0,\beta }''(\mathbb{B}^{n+1})= k-1 + \beta \, (k-1)(n+k-2), \quad k=1,2,\dots $$
        with multiplicity $M_{k-1,0}$ where $k \ge 1$. The eigenspace associated with $\lambda_{k, 0,\beta }''(\mathbb{B}^{n+1})$ is $\iota^* (H_{k-1,0})$. 
\end{thm}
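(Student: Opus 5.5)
The approach is to diagonalize both summands of $\mathcal{L}^{[0]}_\beta=\mathcal{T}^{[0]}+\beta\,\Delta^{\partial M}$ simultaneously, using spherical harmonics on $\partial\mathbb{B}^{n+1}=\mathbb{S}^n$. For $p=0$ the tangential condition $\nu\lrcorner\,\hat\phi=0$ is empty. So $\hat\phi$ is simply the harmonic extension of $\phi$, and $\mathcal{T}^{[0]}\phi=-\partial_\nu\hat\phi$ is the classical Dirichlet-to-Neumann map.

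Take $h\in H_{k-1,0}$ and set $\phi=\iota^*h$. Then $\hat\phi=h$, since $h$ is harmonic on the ball with trace $\phi$ and the extension is unique. Because $\nu=-\partial_r$ is the inward normal and $h$ is homogeneous of degree $k-1$, Euler's identity gives
\[
-\partial_\nu h=\partial_r h=(k-1)h \quad\text{on } \mathbb{S}^n .
\]
Hence $\mathcal{T}^{[0]}\phi=(k-1)\phi$.

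For the boundary term, write the Euclidean Laplacian in polar coordinates as $\Delta_{\R^{n+1}}=\partial_r^2+\frac{n}{r}\partial_r+\frac{1}{r^2}\Delta_{\mathbb{S}^n}$, with the sign convention of analysts. Applying it to $h=r^{k-1}\phi$ and using $\Delta_{\R^{n+1}} h=0$ shows that $\phi$ is a spherical harmonic with
\[
\Delta^{\mathbb{S}^n}\phi=(k-1)(n+k-2)\phi
\]
for the nonnegative Hodge Laplacian. Summing the two contributions, $\phi$ is an eigenfunction of $\mathcal{L}^{[0]}_\beta$ with eigenvalue $(k-1)+\beta(k-1)(n+k-2)$.

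The main step is completeness, i.e. showing that there are no other eigenvalues. The spaces $\iota^*(H_{j,0})$, $j\ge 0$, are mutually $L^2(\mathbb{S}^n)$-orthogonal, being eigenspaces of the self-adjoint operator $\Delta^{\mathbb{S}^n}$ for distinct eigenvalues. Their algebraic sum is dense in $L^2(\mathbb{S}^n)$, by the classical spherical harmonic decomposition (Stone–Weierstrass applied to restrictions of polynomials). Hence they form a complete orthogonal system of eigenfunctions of the self-adjoint operator $\mathcal{L}^{[0]}_\beta$, whose spectrum is discrete by Theorem \ref{thm:wentbc}. Any eigenfunction with eigenvalue outside the list would be orthogonal to all of these spaces and therefore zero.

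The map $k\mapsto (k-1)+\beta(k-1)(n+k-2)$ is strictly increasing for $\beta>0$. So distinct $k$ give distinct eigenvalues, and each eigenspace is exactly $\iota^*(H_{k-1,0})$. The restriction map is injective on $H_{k-1,0}$, since a harmonic function vanishing on the sphere vanishes identically. The multiplicity is therefore $M_{k-1,0}$. I expect no real obstacle here beyond citing the density of spherical harmonics and checking sign conventions: the normal is inward and the Hodge Laplacian is nonnegative. The statement can also be cited from \cite{DKL2016}.
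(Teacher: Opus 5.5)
Your proposal is correct and follows essentially the paper's route: the paper only cites \cite{DKL2016} for $p=0$, but its proof for $p\ge 1$ is the same simultaneous diagonalization of the Dirichlet-to-Neumann operator and the boundary Hodge Laplacian on common eigenspaces, followed by the same density argument for completeness. The one difference is that you derive the two eigenvalue families directly, via Euler's identity and the polar form of the Laplacian, instead of quoting them from the literature.
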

Assume now $p \ge 1$. 
Recall that a $p$-form $\omega =\displaystyle \sum_{1 \le i_1 < \dots < i_p \le n+1} \, \omega_{i_1 \dots i_p} \, dx_{i_1} \wedge \dots \wedge dx_{i_p}$ on $\R^{n+1}$ is called polynomial of homogeneous degree $k$ if the functions $\omega_{i_1\dots i_p}$ are polynomial of homogeneous degree $k$. 
Let $P_{k,p}$ be the vector space of polynomial $p$-forms of homogeneous degree $k \ge 0$ on $\R^{n+1}$. For each $k=0,1,2, \dots$ and $p=1, \dots , n$, we consider the following spaces:
\begin{eqnarray*}
    H_{k,p}&:=&\{\omega \in P_{k,p} \mid \Delta \omega=0, \delta \omega =0\}, \\
    H_{k,p}'&:=&\{\omega \in H_{k,p} \mid d \omega=0\},\\
    H_{k,p}''&:=&\{\omega \in H_{k,p} \mid Z \lrcorner\, \omega=0\},
    \end{eqnarray*}
where $Z=\sum_{i=1}^{n+1} x_i \dfrac{\partial}{\partial x_i}$ is the radial vector field. We also denote by $M_{k,p}$ the dimension of $H_{k,p}''$.
\begin{thm}
\label{spectrum_1_n-1_Went_ball}
    Let $1 \le p \le n-1$. The spectrum of the Wentzell operator $\mathcal{L}^{[p]}_\beta$ on the unit ball of $\R^{n+1}$ consists of the following families of eigenvalues, idexed by $k=1,2, \dots$:
    $$
    \left\{
\begin{array}{ll}\medskip
    \lambda_{k, p,\beta }''(\mathbb{B}^{n+1})= k+p + \beta \, (k+p)(n+k-p-1),  \\\medskip
     \lambda_{k, p,\beta}'(\mathbb{B}^{n+1})= (k+p-1) \dfrac{n+2k+1}{n+2k-1} + \beta \, (k+p-1)(n+k-p).
\end{array}
    \right.
    $$
    The eigenvalues $\lambda_{k, p,\beta }''(\mathbb{B}^{n+1})$ and $\lambda_{k, p,\beta}'(\mathbb{B}^{n+1})$ have multiplicities $M_{k,p}$ and $M_{k,p-1}$, and eigenspaces $\iota^* (H_{k,p}'')$ and $\iota^* (H_{k-1,p}')$, respectively. 
    \end{thm}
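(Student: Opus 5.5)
The strategy is to exploit that $\mathcal{T}^{[p]}$ and $\Delta^{\partial M}$ on $\mathbb{S}^n=\partial\mathbb{B}^{n+1}$ admit a common $L^2$-orthonormal eigenbasis. Then the Wentzell eigenvalues are just sums of the corresponding Dirichlet-to-Neumann eigenvalue and $\beta$ times the Hodge eigenvalue. I will use the result of Raulot and Savo \cite{RS_DtN2014} on the spectrum of $\mathcal{T}^{[p]}$ on the ball for $1\le p\le n-1$. It states that $L^2\Omega^p(\mathbb{S}^n)$ decomposes as the Hilbert sum of the spaces $\iota^*(H''_{k,p})$ and $\iota^*(H'_{k-1,p})$, $k\ge 1$. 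On $\iota^*(H''_{k,p})$ the operator $\mathcal{T}^{[p]}$ acts by $k+p$. On $\iota^*(H'_{k-1,p})$ it acts by $(k+p-1)\frac{n+2k+1}{n+2k-1}$. The dimensions are $M_{k,p}$ and $M_{k,p-1}$ respectively; the latter comes from $d:H''_{k,p-1}\to H'_{k-1,p}$ being an isomorphism.

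The main step is to identify these same spaces as eigenspaces of $\Delta^{\mathbb{S}^n}$, using the classical description of the Hodge spectrum of the round sphere \cite{GM}.

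For $\omega\in H''_{k,p}$ we have $Z\lrcorner\,\omega=0$ and $\delta\omega=0$, so $\iota^*\omega$ is a co-closed eigenform on $\mathbb{S}^n$. Its eigenvalue is $(k+p)(n+k-p-1)$, which is the co-closed eigenvalue of \cite{GM} in degree $p$ with index $k$. I would check this by writing $\omega=r^{k}\,\pi^*\phi$ in polar coordinates, where $\phi=\iota^*\omega$. The Euclidean Laplacian then splits as
\begin{equation*}
\Delta\omega = r^{k-2}\Big(-\big(k(k-1)+(n-2p)k\big)\,\pi^*\phi + \pi^*\Delta^{\mathbb{S}^n}\phi\Big)
\end{equation*}
modulo terms that vanish because $Z\lrcorner\,\omega=0$ and $\delta\omega=0$. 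Setting this to zero gives $\Delta^{\mathbb{S}^n}\phi = k(n+k-2p-1)\phi$. I expect the constant in this formula to need care, since it must match $(k+p)(n+k-p-1)$ after accounting for the $r^p$ factor coming from $|dx_I|$. This bookkeeping is where I expect the main obstacle, and if it does not close directly I would instead cite the eigenvalue lists of \cite{GM} together with the harmonic characterization.

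For $\iota^*(H'_{k-1,p})=\iota^*(d H''_{k,p-1}) = d^{\mathbb{S}^n}\iota^*(H''_{k,p-1})$, the forms are exact. Since $d^{\mathbb{S}^n}$ commutes with $\Delta^{\mathbb{S}^n}$, their Hodge eigenvalue equals that of $\iota^*(H''_{k,p-1})$, namely $(k+p-1)(n+k-p)$ by the previous paragraph with $p$ replaced by $p-1$. This is the second formula, and injectivity of $d^{\mathbb{S}^n}$ on these co-closed nonharmonic forms preserves the multiplicity $M_{k,p-1}$.

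Adding $\beta$ times these values to the $\mathcal{T}^{[p]}$-eigenvalues gives the two families stated in the theorem. Since the spaces form a complete orthogonal system in $L^2\Omega^p(\mathbb{S}^n)$, Theorem~\ref{thm:wentbc} and the remark that eigenforms provide a Hilbert basis show there are no other eigenvalues. The multiplicities are therefore exactly as stated; when two eigenvalues from different families coincide, the multiplicities add.
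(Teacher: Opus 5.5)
Your proposal is correct and follows essentially the paper's route. Both take the common eigenspaces $\iota^*(H''_{k,p})$ and $\iota^*(H'_{k-1,p})$ of $\mathcal{T}^{[p]}$ and $\Delta^{\mathbb{S}^n}$ from \cite{RS_DtN2014} and \cite{GM}, add the Dirichlet-to-Neumann eigenvalue to $\beta$ times the Hodge eigenvalue, and obtain completeness from the density of these spaces in $L^2\Omega^p(\mathbb{S}^n)$. Your optional polar-coordinate check also closes once you write a form in $H''_{k,p}$ as $r^{k+p}\pi^*\phi$ rather than $r^{k}\pi^*\phi$, since each $dx_i$ carries homogeneity weight one. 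With $k$ replaced by $k+p$, your formula gives exactly $(k+p)(n+k-p-1)$.
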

{\it Proof.} 
The eigenvalues and the corresponding eigenspaces of the Hodge Laplacian on the sphere are \cite{GM}:
$$
\left\{
\begin{array}{lll}
\medskip
    \lambda_{k,p}''(\mathbb{S}^n) =(k+p)(n+k-p-1) & \textrm{with multiplicity } M_{k,p} \textrm{ and eigenspace } \iota^*(H_{k,p}''), \\
    \medskip
     \lambda_{k,p}' (\mathbb{S}^n)=(k+p-1)(n+k-p) & \textrm{with multiplicity } M_{k,p-1} \textrm{ and eigenspace } \iota^*(H_{k-1,p}'),
\end{array}
\right. 
$$
for each $k=1,2, \dots$. 
In addition, the eigenvalues of the Dirichlet-to-Neumann operator on the unit ball of $\mathbb{R}^{n+1}$ are given as follows \cite{RS_DtN2014}:
$$
\left\{
\begin{array}{lll}
\medskip
    \nu_{k,p}''(\mathbb{B}^{n+1}) =k+p \,\,\,\, \textrm{with multiplicity } M_{k,p} \textrm{ and eigenspace } \iota^*(H_{k,p}''),  \\
    \medskip
     \nu_{k,p}' (\mathbb{B}^{n+1})=(k+p-1)\dfrac{n+2k+1}{n+2k-1} \quad \textrm{with multiplicity } M_{k,p-1} \textrm{ and eigenspace } \iota^*(H_{k-1,p}'),
\end{array}
\right. 
$$
for each $k=1,2, \dots$. 
\\
Therefore, it follows directly that the tangent part of any 
form belonging to $H_{k,p}''$ is a co-exact eigenform of the Wentzell operator $\mathcal{L}^{[p]}_\beta$ associated with the eigenvalue $k+p+ \beta \, (k+p)(n+k-p-1)$.  Similarly, any form of $\iota^* (H_{k,p-1}')$ is an exact eigenform of the Wentzell operator $\mathcal{L}^{[p]}_\beta$ associated with the eigenvalue $(k+p-1)\left(n+k-p+ \beta \,\frac{n+2k+1}{n+2k-1}\right)$. 
Moreover, since the direct sum of the eigenspaces of $\mathcal{L}^{[p]}_\beta$ is dense in $L^2(\Omega^p(\mathbb{S}^n))$, we deduce that the values given above are precisely all the eigenvalues of the Wentzell operator $\mathcal{L}^{[p]}_\beta$. \hfill$\square$

\vspace{0.3em}

Finally, for $p=n$, the eigenvalues of the Hodge Laplacian on $\mathbb{S}^n$ are given by:
$$\left\{
\begin{array}{lll}
\medskip
    \lambda_{1,n}''(\mathbb{S}^n) =0 \quad \textrm{with multiplicity one},  \\
    \medskip
     \lambda_{k,n}'(\mathbb{S}^n) =k(k+n-1) \quad \textrm{with multiplicity } M_{k,n-1} \,\, (k=1,2,\dots).
\end{array}
\right.$$
The volume form of $\mathbb{S}^n$ is an eigenform associated with $\lambda_{1,n}''(\mathbb{S}^n)$.  The eigenspace associated with $\lambda_{k, n}'(\mathbb{S}^n)$ is $\iota^* (H_{k-1,n}')$ which consists of exact $n$-forms.
\\\\
The eigenvalues of the Dirichlet-to-Neumann operator $\mathcal{T}^{[n]}$ on the ball $\mathbb{B}^{n+1}$ are as follows:
$$\left\{
\begin{array}{lll}
\medskip
    \nu_{1,n}''(\mathbb{B}^{n+1}) =n+1 \quad \textrm{with multiplicity one},  \\
    \medskip
     \nu_{k,n}'(\mathbb{B}^{n+1})=(k+n-1) \dfrac{n+2k+1}{n+2k-1}  \quad \textrm{with multiplicity } M_{k,n-1} \,\, (k=1,2,\dots).
\end{array}
\right.$$
The volume form of $\mathbb{S}^n$ is an eigenform associated with $\nu_{1,n}''(\mathbb{B}^{n+1})$.  The eigenspace associated with $\nu_{k, n}'(\mathbb{B}^{n+1})$ is $\iota^* (H_{k-1,n}')$ which consists of exact $n$-forms.
\\\\
As in Theorem \ref{spectrum_1_n-1_Went_ball}, we immediately deduce the spectrum of the Wentzell operator $\mathcal{L}^{[n]}_\beta$ on the unit ball of $\mathbb{R}^{n+1}$. 
\begin{thm}
\label{spectrum_n_Went_ball}
    The spectrum of the Wentzell operator $\mathcal{L}^{[n]}_\beta$ on the unit ball of $\R^{n+1}$ consists of the following eigenvalues:
    \begin{enumerate}
            \item $\lambda_{1,n,\beta}''(\mathbb{B}^{n+1})= n+1$,  
        with multiplicity one and associated eigenform the volume form of $\mathbb{S}^n$.
       \item  $\lambda_{k,n, \beta}'(\mathbb{B}^{n+1})= (k+n-1) \dfrac{n+2k+1}{n+2k-1} + \beta \, k (k+n-1)$, 
        with multiplicity $M_{k,n-1}$ where $k \ge 1$. The eigenspace associated with $\lambda_{k, n,\beta}'(\mathbb{B}^{n+1})$ is $\iota^* (H_{k-1,n}')$ which consists of exact $n$-forms.  
    \end{enumerate}
\end{thm}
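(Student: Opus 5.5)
Following the proof of Theorem \ref{spectrum_1_n-1_Went_ball}, the idea is that on the unit sphere $\mathbb{S}^n=\partial\mathbb{B}^{n+1}$ the Dirichlet-to-Neumann operator $\mathcal{T}^{[n]}$ and the Hodge Laplacian $\Delta^{\mathbb{S}^n}$ acting on $n$-forms have a common orthogonal basis of eigenforms. Since $\mathcal{L}^{[n]}_\beta=\mathcal{T}^{[n]}+\beta\,\Delta^{\mathbb{S}^n}$ by \eqref{wentzell_operator}, any common eigenform is an eigenform of $\mathcal{L}^{[n]}_\beta$, and its eigenvalue is the sum of the $\mathcal{T}^{[n]}$-eigenvalue and $\beta$ times the Hodge eigenvalue.

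\textbf{The volume form.} Let $\mathrm{vol}_{\mathbb{S}^n}$ be the volume form of $\mathbb{S}^n$. It is parallel, hence harmonic, so $\Delta^{\mathbb{S}^n}\mathrm{vol}_{\mathbb{S}^n}=0$. By the recalled result of \cite{RS_DtN2014}, it is an eigenform of $\mathcal{T}^{[n]}$ for $\nu_{1,n}''=n+1$. We therefore get
$$\mathcal{L}^{[n]}_\beta\,\mathrm{vol}_{\mathbb{S}^n}=(n+1)\,\mathrm{vol}_{\mathbb{S}^n},$$
and this eigenvalue does not depend on $\beta$. As a sanity check, the harmonic tangential extension is $\hat\phi=Z\lrcorner\,(dx_1\wedge\dots\wedge dx_{n+1})$, where $Z$ is the radial vector field, and computing $-\nu\lrcorner\, d\hat\phi$ gives $n+1$ directly.

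\textbf{The exact eigenforms.} For $k\ge1$ and $\omega\in H_{k-1,n}'$, the form $\iota^*\omega$ belongs to the eigenspace of $\Delta^{\mathbb{S}^n}$ for $k(k+n-1)$ \cite{GM}, and to the eigenspace of $\mathcal{T}^{[n]}$ for $(k+n-1)\frac{n+2k+1}{n+2k-1}$ \cite{RS_DtN2014}. Both facts concern the same space $\iota^*(H_{k-1,n}')$, so $\iota^*\omega$ is an eigenform of $\mathcal{L}^{[n]}_\beta$ with eigenvalue $(k+n-1)\frac{n+2k+1}{n+2k-1}+\beta\,k(k+n-1)$.

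\textbf{Completeness.} The two families above exhaust the spectrum of $\Delta^{\mathbb{S}^n}$ on $n$-forms, so their span is dense in $L^2(\Omega^n(\mathbb{S}^n))$. Eigenforms of the self-adjoint operator $\mathcal{L}^{[n]}_\beta$ belonging to distinct eigenvalues are $L^2$-orthogonal, by \eqref{wentselfadjoint}. Suppose $\mathcal{L}^{[n]}_\beta$ had an eigenform for a value not in the list, or an eigenform outside the listed eigenspaces. It would be orthogonal to a dense set, which is a contradiction.

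\textbf{Multiplicities.} Different families may share a numerical eigenvalue, so I will state the multiplicity of each family value as the dimension of its space: $1$ for the volume form, and $\dim\iota^*(H_{k-1,n}')=M_{k,n-1}$ for the exact forms, as in the Hodge spectrum.

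\textbf{Main obstacle.} The one nontrivial input is that \cite{GM} and \cite{RS_DtN2014} produce the \emph{same} eigenspaces $\iota^*(H_{k-1,n}')$, together with the density of these eigenforms. Both are taken from the cited results, so the rest of the proof is formal.
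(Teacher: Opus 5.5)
Your proposal is correct and follows the paper's route exactly. The paper also obtains the spectrum of $\mathcal{L}^{[n]}_\beta$ by adding the Dirichlet-to-Neumann and Hodge eigenvalues on their common eigenspaces (the volume form and $\iota^*(H_{k-1,n}')$), and concludes completeness from the density of these eigenforms in $L^2(\Omega^n(\mathbb{S}^n))$; your caveat about coinciding values is unnecessary, since for $\beta>0$ one has $\lambda_{1,n,\beta}'(\mathbb{B}^{n+1})=n\frac{n+3}{n+1}+\beta n>n+1$ and $k\mapsto\lambda_{k,n,\beta}'(\mathbb{B}^{n+1})$ is strictly increasing, so all listed values are distinct and the multiplicities are exactly $1$ and $M_{k,n-1}$.
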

We conclude this section with the expression of the smallest eigenvalue of the Wentzell operator on the Euclidean unit ball. When $p=0$, the smallest eigenvalue is 0 and for $p=n$, the first eigenvalue is $n+1$ with multiplicity 1. 
\begin{cor}
\label{cor:first_eigen_ball}
    Let $1 \le p \le n-1$. The first eigenvalue $\lambda_{1,p,\beta}(\mathbb{B}^{n+1})$ of $\mathcal{L}^{[p]}_\beta$ is given by:
    $$\lambda_{1,p,\beta}(\mathbb{B}^{n+1})=\left\{
    \begin{array}{cc}
    \medskip
        p\left(\dfrac{n+3}{n+1} +\beta \, (n+1-p)  \right) &  \textrm{if } p <  \dfrac{n+1}{2}, \\
        \medskip
       (p+1)(1+\beta \, (n-p))  & \textrm{if } p \ge  \dfrac{n+1}{2}.  
    \end{array}
    \right.
    $$
\end{cor}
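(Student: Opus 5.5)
The plan is to read off the first eigenvalue from the explicit spectrum in Theorem~\ref{spectrum_1_n-1_Went_ball}. First I show that each of the two families $k\mapsto\lambda''_{k,p,\beta}(\mathbb{B}^{n+1})$ and $k\mapsto\lambda'_{k,p,\beta}(\mathbb{B}^{n+1})$ is nondecreasing in $k$. Then I compare the two values at $k=1$.

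\emph{Monotonicity.} For $\lambda''_{k,p,\beta}=(k+p)\bigl(1+\beta(n+k-p-1)\bigr)$ this is immediate: both factors are positive and increasing in $k$, since $p\le n-1$ and $\beta>0$.

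For $\lambda'_{k,p,\beta}$, the $\beta$-part $(k+p-1)(n+k-p)$ is increasing in $k$ because $n-p\ge 1$. It remains to treat $f(k)=(k+p-1)\frac{n+2k+1}{n+2k-1}=(k+p-1)\bigl(1+\frac{2}{n+2k-1}\bigr)$. A direct computation gives
\begin{equation*}
f(k+1)-f(k)=1+\frac{2(n+1-2p)}{(n+2k+1)(n+2k-1)}.
\end{equation*}
If $2p\le n+1$ this is clearly positive. If $2p>n+1$, the correction term has absolute value at most $\frac{2(2p-n-1)}{(n+3)(n+1)}\le\frac{2(n-3)}{(n+3)(n+1)}<1$, using $p\le n-1$ and $k\ge1$. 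So $f$ is increasing in every case. This small estimate is the only step needing care, since the ratio $\frac{n+2k+1}{n+2k-1}$ decreases in $k$ and the monotonicity is not obvious a priori.

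\emph{Comparison at $k=1$.} By monotonicity, $\lambda_{1,p,\beta}(\mathbb{B}^{n+1})$ is the smaller of
\begin{equation*}
\lambda''_{1,p,\beta}=(p+1)\bigl(1+\beta(n-p)\bigr)\quad\text{and}\quad \lambda'_{1,p,\beta}=p\Bigl(\frac{n+3}{n+1}+\beta(n+1-p)\Bigr).
\end{equation*}
Their difference is
\begin{equation*}
\lambda''_{1,p,\beta}-\lambda'_{1,p,\beta}=\frac{n+1-2p}{n+1}+\beta\,(n-2p).
\end{equation*}
If $p<\frac{n+1}{2}$, then $2p\le n$, so both terms are nonnegative and the first term is strictly positive. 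Hence the minimum is $\lambda'_{1,p,\beta}$, which is the first line of the formula.

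If $p\ge\frac{n+1}{2}$, then $n+1-2p\le 0$ and $n-2p\le -1<0$. So the difference is negative (using $\beta>0$), the minimum is $\lambda''_{1,p,\beta}$, and this is the second line.
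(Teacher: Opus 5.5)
Your proof is correct and follows the paper's route: the paper also compares the two $k=1$ values through the difference $\frac{2p-n-1}{n+1}+\beta(2p-n)$ (the negative of yours) and reads off its sign. The paper leaves implicit that each family is minimized at $k=1$, and your monotonicity check, including the bound on $f(k+1)-f(k)$ when $2p>n+1$, correctly supplies that step.
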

 {\it Proof.}  A simple computation gives $$\lambda_{1,p,\beta}'(\mathbb{B}^{n+1})-\lambda_{1,p,\beta}''(\mathbb{B}^{n+1})= \dfrac{2p-n-1}{n+1}+\beta\,(2p-n).$$
Since the above term is positive if and only if $p \ge \frac{n+1}{2}$, this completes the proof.  \hfill$\square$

\section{Eigenvalue estimates}
\label{Sec:Wentz_eigen_estimates}
Let $M$ be a compact Riemannian manifold with smooth boundary and $\beta$ a positive real number. We will give different estimates of the first eigenvalue of the Wentzell operator.  

The following result generalizes inequality (1.6) in \cite{XiaWang2018} to differential forms.
\begin{prop}
\label{min_Wentz_DtN_Delta_bound}
We have the following estimate for the first eigenvalue of the Wentzell operator $\mathcal{L}_\beta^{[p]}$ involving the first eigenvalues $\nu_{1,p}(M)$ of the Dirichlet-to-Neumann operator and $\lambda_{1,p}(\partial M)$ of the Hodge Laplacian on $\partial M$: 
 \begin{equation}
     \label{est_Wen_boundLapl_DtN}
  \lambda_{1,p,\beta}(M) \ge \nu_{1,p}(M)+  \beta \, \lambda_{1,p}(\partial M) ,   
 \end{equation} 
 with equality in the case of a Euclidean ball. 
\end{prop}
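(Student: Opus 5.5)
The plan is to plug a first eigenform into the variational characterization \eqref{variat_char} and split the Rayleigh quotient into its interior and boundary parts. Each part is then bounded below separately, by the Dirichlet-to-Neumann and by the boundary Hodge Laplacian characterizations respectively.

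Let $\phi$ be an eigenform of $\mathcal{L}^{[p]}_\beta$ associated with $\lambda_{1,p,\beta}(M)$, and let $\omega=\hat\phi$ be its harmonic tangential extension. By Theorem~\ref{thm:wentbc}, the infimum in \eqref{variat_char} is attained at $\omega$, so
$$\lambda_{1,p,\beta}(M)=\frac{\mathcal{D}(\omega,\omega)}{\|\phi\|^2_{L^2(\partial M)}}+\beta\,\frac{\mathcal{D}^{\partial M}(\phi,\phi)}{\|\phi\|^2_{L^2(\partial M)}}.$$

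The first quotient is bounded below by $\nu_{1,p}(M)$. This is the variational characterization of the first Dirichlet-to-Neumann eigenvalue, which is the case $\beta=0$ of \eqref{variat_char} and coincides with identity (7) in \cite{RS_DtN2012}. It applies because $\omega$ is a smooth $p$-form with $\nu\lrcorner\,\omega=0$. The second quotient is bounded below by $\lambda_{1,p}(\partial M)$. This follows from the Rayleigh characterization of the lowest eigenvalue of the Hodge Laplacian on the closed manifold $\partial M$, namely $\mathcal{D}^{\partial M}(\phi,\phi)\ge\lambda_{1,p}(\partial M)\|\phi\|^2_{L^2(\partial M)}$, together with $\beta>0$. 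Adding the two bounds gives \eqref{est_Wen_boundLapl_DtN}. Here $\lambda_{1,p}(\partial M)$ must be understood as the smallest eigenvalue, possibly $0$, consistent with the convention in the paper for $p=0$.

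For the equality case on $\mathbb{B}^{n+1}$, I would use the explicit spectra from Section~\ref{Sec:Wentz_spectrum_Euclidean_ball}. The key fact is that $\mathcal{T}^{[p]}$ and $\Delta^{\partial M}$ share the same eigenspaces, and that within each family both the Dirichlet-to-Neumann and the Laplace eigenvalues increase in $k$. It therefore suffices to check that the same eigenspace realizes the minimum for both operators simultaneously. I treat the degrees separately.
\begin{itemize}
\item For $p=0$, every quantity is $0$.
\item For $p=n$, we have $\lambda_{1,n,\beta}=n+1=\nu_{1,n}$, attained by the volume form, and $\lambda_{1,n}(\mathbb{S}^n)=0$.
\item For $1\le p\le n-1$ with $p\ge\frac{n+1}{2}$, the minimizers are the $\iota^*(H''_{1,p})$ forms. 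This gives $\nu_{1,p}=p+1$ and $\lambda_{1,p}(\mathbb{S}^n)=(p+1)(n-p)$, which is the minimum of $(p+1)(n-p)$ and $p(n-p+1)$ because $2p\ge n+1>n$. This matches Corollary~\ref{cor:first_eigen_ball}.
\item For $1\le p\le n-1$ with $p<\frac{n+1}{2}$, the minimizers are the $\iota^*(H'_{0,p})$ forms. This gives $\nu_{1,p}=p\frac{n+3}{n+1}$, and we need $\lambda_{1,p}(\mathbb{S}^n)=p(n+1-p)$.
\end{itemize}

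The main obstacle is the last case when $p=n/2$. There the $''$ value $(p+1)(n-p)$ is smaller than the $'$ value $p(n+1-p)$, so the two minima are attained on different eigenspaces. I would first try to settle this by checking the sign of $2p-n$ directly. If equality genuinely fails at $p=n/2$, I would restrict the equality claim to degrees with $2p\ne n$, or note that it holds for the ranges where a common eigenspace minimizes both operators.
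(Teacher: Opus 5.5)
Your proof of the inequality is the same as the paper's: take a first eigenform $\phi$ and note that $\hat\phi$ realizes the Wentzell Rayleigh quotient. Then bound the interior part below by $\nu_{1,p}(M)$, using the Raulot--Savo characterization, which applies since $\nu\lrcorner\,\hat\phi=0$. Bound the boundary part below by $\lambda_{1,p}(\partial M)$, and add. Your degree-by-degree check of equality on the ball via the explicit spectra is also the route the paper intends; it only says this "can easily be checked". Your cases $p=0$, $p=n$ and $\frac{n+1}{2}\le p\le n-1$ are correct.

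The error is your ``main obstacle'' at $p=n/2$, which is a miscalculation. The two Hodge values do not differ there, because
\[
p(n+1-p)-(p+1)(n-p)=2p-n ,
\]
which vanishes for $2p=n$; both values equal $p(p+1)$. This is forced: in middle degree $\star_{\mathbb{S}^n}$ exchanges exact and co-exact $p$-forms and commutes with the Hodge Laplacian, so the two families have identical spectra. For $n=2$, $p=1$, both first values on $\mathbb{S}^2$ equal $2$. Hence $\lambda_{1,p}(\mathbb{S}^n)=p(n+1-p)$ for all $2p\le n$. In the same range $\nu'_{1,p}-\nu''_{1,p}=\frac{2p-n-1}{n+1}<0$, so $\iota^*(H'_{0,p})$ realizes both minima, and by Corollary~\ref{cor:first_eigen_ball}
\[
\nu_{1,p}(\mathbb{B}^{n+1})+\beta\,\lambda_{1,p}(\mathbb{S}^n)=p\tfrac{n+3}{n+1}+\beta\, p(n+1-p)=\lambda_{1,p,\beta}(\mathbb{B}^{n+1}).
\]
So equality holds in every degree, and restricting the claim to $2p\ne n$ would be an unnecessary weakening of a true statement. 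As written, your proposal leaves this case unresolved on the basis of a false comparison. Replace your last paragraph with the computation above, which is exactly the ``check the sign of $2p-n$'' you suggested.
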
 
{\it Proof. } Let $\phi$ be an eigenform of $\mathcal{L}^{[p]}_\beta$ associated with the eigenvalue $\lambda_{1,p,\beta}(M)$. We denote by $\hat{\phi}$ the unique harmonic tangential extension of $\phi$ to $M$. As $\nu \lrcorner \, \hat{\phi}=0$, by the variational characterization of $\nu_{1,p}(M)$ \cite[Thm. 11]{RS_DtN2012}, we have the inequality:
\begin{eqnarray*}
    \|\phi\|_{L^2(\partial M)}^2 \, \nu_{1,p}(M) \le \|d \hat{\phi}\|_{L^2(M)}^2 + \|\delta \hat{\phi}\|_{L^2(M)}^2.
\end{eqnarray*}
Similarly, 
\begin{eqnarray*}
    \|\phi\|_{L^2(\partial M)}^2 \, \lambda_{1,p}(\partial M) \le \|d^{\partial M} {\phi}\|_{L^2(\partial M)}^2 + \|\delta^{\partial M} {\phi}\|_{L^2(\partial M)}^2.
\end{eqnarray*}
Therefore, 
\begin{eqnarray*}
    \|\phi\|_{L^2(\partial M)}^2 \left(\nu_{1,p}(M) +\beta \,\lambda_{1,p}(\partial M) \right) \le \|d \hat{\phi}\|_{L^2(M)}^2 + \|\delta \hat{\phi}\|_{L^2(M)}^2 + \beta \, \left( \|d^{\partial M} {\phi}\|_{L^2(\partial M)}^2 + \|\delta^{\partial M} {\phi}\|_{L^2(\partial M)}^2\right).
\end{eqnarray*}
After division by $\|\phi\|_{L^2(\partial M)}^2$, the r.h.s. reduces to $\lambda_{1,p,\beta}(M)$. Thus, we obtain the required inequality. When $M$ is a Euclidean ball, we can easily check that equality is achieved in \eqref{est_Wen_boundLapl_DtN} using the eigenvalues of $\mathcal{L}^{[p]}_\beta$ computed above, together with those of the Dirichlet-to-Neumann operator and the boundary Hodge Laplacian. \hfill$\square$
\begin{remark}
   Equality holds in \eqref{est_Wen_boundLapl_DtN} if and only if  every  eigenform of $\mathcal{L}^{[p]}_\beta$ associated with the eigenvalue $\lambda_{1,p,\beta}(M)$ is simultaneously an eigenform of the Dirichlet-to-Neumann operator associated with $\nu_{1,p}(M)$ and an eigenform of the boundary Hodge Laplacian associated with $\lambda_{1,p}(\partial M)$.
\end{remark}
%
Let us now recall some notions that will be used in what follows. We denote by $W^{[p]}$ the Bochner operator 
$W^{[p]}:=-\sum_{i,j=1}^{n+1} e_i^* \wedge(e_j\lrcorner \, R(e_i,e_j))$, 
where $\{e_i\}_{i=1, \dots ,n+1}$ is
a local orthonormal frame of $TM$ and $R$ is the Riemann curvature tensor of $(M,g)$ given by $R(X,Y)=[\nabla_X, \nabla_Y]-\nabla_{[X,Y]}$  for all $X,Y \in \Gamma(TM)$.
 We also denote by $S$ the shape operator given by $S(X)=-\nabla_X \nu$ for $X \in \Gamma(T\partial M)$,  and by $S^{[p]}$ its canonical extension to $p$-forms defined by
\begin{equation}\label{ext_S}
S^{[p]}(\omega)(X_1, \dots , X_p):=\sum_{j=1}^p \omega (X_1, \dots , S(X_j), \dots , X_p),\end{equation}
for $\omega \in \Omega^p(\partial M)$ and $X_1, \dots, X_p \in \Gamma(T\partial M)$. For $p=0$, by convention, $S^{[0]}=0$. If $\eta_1\le  \dots \le \eta_{n}$ are the principal curvatures of $\partial M$, set $\sigma_p=\eta_1 + \dots + \eta_p$, and $\sigma_p(\partial M)=\displaystyle\inf_{x \in \partial M} \sigma_p(x)$. \\\\
Using Proposition~\ref{min_Wentz_DtN_Delta_bound}, we derive the following estimates.
\begin{prop}
\label{lower_bounds_wentzell}
    Let $p=1, \dots , n-1$. Assume that $W^{[p]}\ge 0$, $W^{[p+1]}\ge 0$. Then 
    \begin{enumerate}
        \item If $1 \le p < \frac{n+1}{2}$, then under the condition  $\sigma_p(\partial M) >0$, we have \begin{equation}\label{lower_bound_Went_first_case}\lambda_{1,p,\beta} (M) > \frac{n-p+2}{n-p+1}\sigma_p(\partial M) + \beta \, \min(\sigma_p(\partial M) \sigma_{n-p+1}(\partial M),\sigma_{p+1}(\partial M) \sigma_{n-p}(\partial M) ).\end{equation}
        \item If $p \ge \frac{n+1}{2}$, then under the condition $\sigma_{n-p}(\partial M) >0$, we have
        \begin{equation}\label{lower_bound_Went_second_case}\lambda_{1,p,\beta} (M) \ge \frac{p+1}{p}\sigma_p(\partial M) + \beta \, \min(\sigma_p(\partial M) \sigma_{n-p+1}(\partial M),\sigma_{p+1}(\partial M) \sigma_{n-p}(\partial M), \end{equation}
        with equality when $M$ is a Euclidean ball. Furthermore, if the Ricci curvature of $M$ is nonnegative, then equality occurs in \eqref{lower_bound_Went_second_case} if and only if $M$ is a Euclidean ball. 
            \end{enumerate}
\end{prop}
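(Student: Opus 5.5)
The plan is to apply Proposition~\ref{min_Wentz_DtN_Delta_bound}, which gives $\lambda_{1,p,\beta}(M)\ge \nu_{1,p}(M)+\beta\,\lambda_{1,p}(\partial M)$, and then bound the two terms on the right separately by known estimates.

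For the Dirichlet-to-Neumann term we invoke the Raulot--Savo bounds recalled in the introduction. Under $W^{[p]}\ge 0$ and $\sigma_p(\partial M)>0$ one has $\nu_{1,p}(M)>\frac{n-p+2}{n-p+1}\sigma_p(\partial M)$ for $1\le p<\frac{n+1}{2}$. Under the corresponding hypothesis for $p\ge\frac{n+1}{2}$ one has $\nu_{1,p}(M)\ge\frac{p+1}{p}\sigma_p(\partial M)$. In the latter case, if $\mathrm{Ric}\ge0$, equality holds only for the Euclidean ball. Before using these, we will check that the hypotheses stated in \cite{RS_DtN2012} match ours, including the positivity of $\sigma_p$ or $\sigma_{n-p}$.

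For the boundary Hodge Laplacian term we use the Savo-type lower bound for hypersurfaces. If $W^{[p]}\ge0$ and $W^{[p+1]}\ge0$ on $M$, and the relevant $\sigma$'s are positive, then
\[
\lambda_{1,p}(\partial M)\ge \min\big(\sigma_p\sigma_{n-p+1},\,\sigma_{p+1}\sigma_{n-p}\big).
\]
Such bounds come from the Reilly-type formula for forms. One splits an eigenform of $\Delta^{\partial M}$ into exact and co-exact parts. The first term comes from co-closed eigenforms via the harmonic extension argument in degree $p$, and the second from the degree-$(p+1)$ argument applied to $d\phi$; this is why both curvature conditions $W^{[p]},W^{[p+1]}\ge0$ appear. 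I expect this to be the main obstacle. We need the precise literature statement (Raulot--Savo, ``a Reilly formula and eigenvalue estimates for differential forms''), and we must verify that $\sigma_p(\partial M)>0$ (resp.\ $\sigma_{n-p}(\partial M)>0$) suffices for its positivity hypotheses. In case 1, $\sigma_p>0$ forces $\sigma_{n-p}\ge\frac{n-p}{p}\sigma_p>0$, since averages of the principal curvatures increase with the number of terms. In case 2, $\sigma_{n-p}>0$ gives $\sigma_p>0$ similarly, since $p\ge n-p$. If the cited bound uses only one of the two terms in the minimum in a given degree range, taking the minimum of both is still a valid, weaker statement.

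Adding the two bounds gives \eqref{lower_bound_Went_first_case}, with strict inequality inherited from the Dirichlet-to-Neumann part, and gives \eqref{lower_bound_Went_second_case}. For the ball we check equality directly using Corollary~\ref{cor:first_eigen_ball}. When $p\ge\frac{n+1}{2}$ all $\eta_i=1$, so $\sigma_p=p$, and the right side equals
\[
(p+1)+\beta\min\big(p(n-p+1),(p+1)(n-p)\big).
\]
Since $p\ge\frac{n+1}{2}$, the minimum is $(p+1)(n-p)$, which matches $(p+1)(1+\beta(n-p))$. Conversely, suppose equality holds in \eqref{lower_bound_Went_second_case} and $\mathrm{Ric}\ge0$. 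Then both intermediate inequalities must be equalities, in particular $\nu_{1,p}(M)=\frac{p+1}{p}\sigma_p(\partial M)$, and the rigidity part of the Raulot--Savo theorem gives that $M$ is a Euclidean ball.
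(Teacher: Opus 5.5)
Your proposal is the paper's proof: combine Proposition~\ref{min_Wentz_DtN_Delta_bound} with \cite[Thm.~1]{RS_DtN2012} for $\nu_{1,p}(M)$ and \cite[Thm.~5]{RaulotSavo2011} for $\lambda_{1,p}(\partial M)$, check that the assumed positivity of $\sigma_p(\partial M)$ (resp.\ $\sigma_{n-p}(\partial M)$) gives $\min(\sigma_p(\partial M),\sigma_{n-p}(\partial M))>0$, and verify the ball case through Corollary~\ref{cor:first_eigen_ball}. The only difference is the rigidity step: the paper notes that \eqref{low_bound_Hodge_lapl_boundary} must be an equality and uses the limiting case of \cite[Thm.~5]{RaulotSavo2011}, which is where $\mathrm{Ric}\ge 0$ enters, whereas you use the equality case of the Dirichlet-to-Neumann bound; since $\beta>0$ forces both intermediate inequalities to be equalities, either choice works, provided the equality statement you quote covers the degree $p$ in question.
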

{\it Proof.} It is based on the inequality \eqref{est_Wen_boundLapl_DtN} and the lower bound of $\nu_{1,p}(M)$ obtained in \cite{RS_DtN2012} and that of $\lambda_{1,p}(\partial M)$ given in \cite{RaulotSavo2011}. Indeed, under our assumption on the curvature operator and if in addition $\min(\sigma_p(\partial M), \sigma_{n-p}(\partial M)) >0$, then using \cite[Thm. 5]{RaulotSavo2011}, we find
\begin{equation}
    \label{low_bound_Hodge_lapl_boundary}
\lambda_{1,p}(\partial M) \ge \min(\sigma_p(\partial M) \sigma_{n-p+1}(\partial M),\sigma_{p+1}(\partial M) \sigma_{n-p}(\partial M). \end{equation}
Assume first $1 \le p < \frac{n+1}{2}$, then $p \le n-p$ which implies that $\sigma_p(\partial M) \le \sigma_{n-p}(\partial M)$. Therefore, \eqref{low_bound_Hodge_lapl_boundary} is satisfied since $\sigma_p(\partial M) >0$.  
Hence, combining \eqref{low_bound_Hodge_lapl_boundary} with the inequality 
$\nu_{1,p} (M) > \frac{n-p+2}{n-p+1}\sigma_p(\partial M)$ \cite[Thm. 1]{RS_DtN2012}, we deduce \eqref{lower_bound_Went_first_case}. When $p \ge \frac{n+1}{2}$, $\sigma_{n-p}(\partial M) \le \sigma_{p}(\partial M)$, so we can apply again \eqref{low_bound_Hodge_lapl_boundary} because $\sigma_{n-p}(\partial M) >0$, which, combined with the estimate $\nu_{1,p} (M) \ge \frac{p+1}{p}\sigma_p(\partial M)$ \cite[Thm. 1]{RS_DtN2012} gives the desired inequality. Moreover, when $M$ is a Euclidean ball, equality holds in \eqref{lower_bound_Went_second_case}, as can be easily seen. Conversely, assume \eqref{lower_bound_Went_second_case} is an equality and  $\textrm{Ric}^M \ge 0$, then \eqref{low_bound_Hodge_lapl_boundary} is an equality as well. Therefore, by the limiting case in \cite[Thm. 5]{RaulotSavo2011}, we deduce that $M$ is a Euclidean ball.  \hfill$\square$ 
\begin{prop}
\label{lower_bound_Went_n}
Assume that the Ricci curvature of $M$ is nonnegative and that the mean curvature $H:=\frac{1}{n} \mathrm{tr}(S)$ of the boundary is bounded below by a nonnegative real number $H_0$. Then 
    \begin{equation}
        \label{est_eigenv_n_went}
    \lambda_{1,n,\beta}(M) \ge (n+1)H_0.\end{equation}
      If $n \ge 2$,  equality is achieved if and only if $M$ is a Euclidean ball.
\end{prop}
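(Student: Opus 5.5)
The plan is to reduce everything to the Dirichlet-to-Neumann estimate of Raulot and Savo through Proposition~\ref{min_Wentz_DtN_Delta_bound}. That proposition gives
$$\lambda_{1,n,\beta}(M)\ \ge\ \nu_{1,n}(M)+\beta\,\lambda_{1,n}(\partial M)\ \ge\ \nu_{1,n}(M),$$
since $\beta>0$ and $\lambda_{1,n}(\partial M)\ge 0$. In fact $\lambda_{1,n}(\partial M)=0$, because $\partial M$ is closed and its volume form (on each component) is a harmonic $n$-form. Under the assumptions $\mathrm{Ric}^M\ge 0$ and $H\ge H_0\ge 0$, Raulot and Savo \cite{RS_DtN2012} proved $\nu_{1,n}(M)\ge (n+1)H_0$. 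Chaining the two inequalities gives \eqref{est_eigenv_n_went}.

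For the equality case, first suppose $M$ is a Euclidean ball. Its boundary is a round sphere of radius $R$, so $H\equiv 1/R$, and we take $H_0=1/R$. By scaling Theorem~\ref{spectrum_n_Went_ball}, the eigenvalue carried by the boundary volume form is $(n+1)/R$. The other branch is $\lambda'_{k,n,\beta}$; after scaling, its DtN part is $\frac{(k+n-1)(n+2k+1)}{(n+2k-1)R}$, which is at least $\frac{n(n+3)}{(n+1)R}>\frac{n+1}{R}$ for $k\ge 1$, and its $\beta$-part is positive. Hence $\lambda_{1,n,\beta}=(n+1)/R=(n+1)H_0$, so equality holds.

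Conversely, suppose $\lambda_{1,n,\beta}(M)=(n+1)H_0$. Using $\lambda_{1,n}(\partial M)=0$, Proposition~\ref{min_Wentz_DtN_Delta_bound} and the Raulot--Savo bound give
$$(n+1)H_0\ \le\ \nu_{1,n}(M)\ \le\ \lambda_{1,n,\beta}(M)\ =\ (n+1)H_0,$$
so $\nu_{1,n}(M)=(n+1)H_0$. The rigidity part of the Raulot--Savo theorem, which holds for $n\ge 2$, then implies that $M$ is a Euclidean ball. The $\beta$-term causes no trouble here precisely because $\lambda_{1,n}(\partial M)$ vanishes.

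The main obstacle is only bookkeeping: checking that the hypotheses and the dimension restriction $n\ge 2$ of the cited Raulot--Savo rigidity statement match ours. One also needs to note that equality forces $H_0$ to be the actual minimum of $H$; otherwise $\nu_{1,n}(M)\ge (n+1)\min H>(n+1)H_0$ and the inequality is strict.
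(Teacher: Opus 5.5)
Your proposal is correct and follows the paper's proof: you combine Proposition~\ref{min_Wentz_DtN_Delta_bound} with the Raulot--Savo bound $\nu_{1,n}(M)\ge (n+1)H_0$ from \cite{RS_DtN2012}, check the ball case via Theorem~\ref{spectrum_n_Went_ball} (your scaling to radius $R$ is a useful extra detail), and for the converse squeeze to $\nu_{1,n}(M)=(n+1)H_0$ and invoke Raulot--Savo rigidity. Two small caveats. First, the squeeze only needs $\beta\,\lambda_{1,n}(\partial M)\ge 0$, not $\lambda_{1,n}(\partial M)=0$, which would require $\partial M$ to be orientable. Second, as in the paper's own statement, the rigidity step genuinely needs $H_0>0$: for $H_0=0$, the flat $T^n\times[0,1]$ satisfies $\lambda_{1,n,\beta}=0=(n+1)H_0$ without being a ball.
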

{\it Proof.} By using Proposition~\ref{min_Wentz_DtN_Delta_bound} and the estimate $\nu_{1,n}(M) \ge (n+1)H_0$ (\cite[Thm. 2]{RS_DtN2012}), we obtain the stated inequality. If $M$ is a Euclidean ball, equality follows from Theorem~\ref{spectrum_n_Went_ball}. Conversely, equality in \eqref{est_eigenv_n_went} implies equality in \cite[Thm. 2]{RS_DtN2012}, and therefore $M$ is a Euclidean ball.
\hfill$\square$ 

\vspace{0.3em}

Next, we derive upper bounds for the eigenvalues of the Wentzell operator in some specific cases. 
\begin{prop}
\label{up_bound_harm} 
Let $f$ be a harmonic function on $M$ such that $df \ne 0$. Then
     \begin{equation}
\label{upp_bound_lambda_2_0}\lambda_{2,0,\beta}(M) \le \dfrac{\displaystyle\int_{\partial M}  \big(\dfrac{\partial f}{\partial \nu}\big)^2 \, d \mu_g}{\displaystyle \int_M |df|^2 \, d \mu_g} +\beta \, \dfrac{\displaystyle\int_{\partial M}  |d^{\partial M}f|^2 \, d \mu_g \, \displaystyle\int_{\partial M}  \big(\dfrac{\partial f}{\partial \nu}\big)^2 \, d \mu_g}{\displaystyle\left(\int_M |df|^2 \, d \mu_g\right)^2}.\end{equation}
Moreover, equality holds in \eqref{upp_bound_lambda_2_0} if and only if $f_{|_{\partial M}}$ is an eigenfunction of $\mathcal{L}_\beta^{[0]}$ associated with the eigenvalue $\lambda_{2,0,\beta}(M)$, and is also an eigenfunction of both $\mathcal{T}^{[0]}$ and $\Delta^{\partial M}$. 
\end{prop}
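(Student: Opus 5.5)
Since $f$ is harmonic with $df\neq 0$, its normal derivative is a natural test function in the variational characterization of $\lambda_{2,0,\beta}(M)$, which for $p=0$ is the first positive eigenvalue. Set $u:=\frac{\partial f}{\partial \nu}$ on $\partial M$. By Green's formula, $\int_{\partial M} u \, d\mu_g = -\int_M \Delta f \, d\mu_g=0$ (up to sign convention), so $u$ is $L^2(\partial M)$-orthogonal to the constants, which form the kernel of $\mathcal{L}^{[0]}_\beta$. The min–max characterization for $\lambda_{2,0,\beta}(M)$ follows from Theorem~\ref{thm:wentbc} and the orthonormal basis of eigenfunctions in the subsequent Remark. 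Applying it to $\hat u$, the harmonic extension of $u$, gives
$$\lambda_{2,0,\beta}(M)\le \frac{\int_M|d\hat u|^2+\beta\int_{\partial M}|d^{\partial M}u|^2}{\int_{\partial M}u^2}.$$
This form does not directly produce the claimed bound, which has $\int_M|df|^2$ in the denominator. I would therefore instead use the standard Dirichlet-to-Neumann duality, working with $f$ itself and a Cauchy–Schwarz argument.

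Concretely, let $\bar f$ be the mean of $f$ over $\partial M$ and set $h=f-\bar f$, which is still harmonic. Then
$$\int_M|df|^2=-\int_{\partial M} h\,\frac{\partial f}{\partial\nu}\le \|h\|_{L^2(\partial M)}\,\Big\|\frac{\partial f}{\partial\nu}\Big\|_{L^2(\partial M)}.$$
This gives $\|h\|^2_{L^2(\partial M)}\ge (\int_M|df|^2)^2/\int_{\partial M}u^2$. Now use $h$ as the test function, which is legitimate since $h|_{\partial M}\perp 1$:
$$\lambda_{2,0,\beta}\le \frac{\int_M|df|^2+\beta\int_{\partial M}|d^{\partial M}f|^2}{\|h\|^2_{L^2(\partial M)}}\le \frac{\int_{\partial M}u^2}{\big(\int_M|df|^2\big)^2}\Big(\int_M|df|^2+\beta\int_{\partial M}|d^{\partial M}f|^2\Big),$$
which is exactly \eqref{upp_bound_lambda_2_0}. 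If $h|_{\partial M}=0$, then $h\equiv 0$, contradicting $df\neq 0$, so the denominator is positive. Likewise $u\not\equiv 0$: otherwise $\int_M|df|^2=0$.

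For the equality case, both inequalities must be equalities. Equality in the min–max step means $h|_{\partial M}$ is a $\lambda_{2,0,\beta}$-eigenfunction of $\mathcal{L}^{[0]}_\beta$; this is the attainment statement of Theorem~\ref{thm:wentbc} restricted to the orthogonal complement of the constants. Equality in Cauchy–Schwarz means $-\frac{\partial f}{\partial\nu}=c\,h$ on $\partial M$, i.e.\ $f|_{\partial M}$ is an eigenfunction of $\mathcal{T}^{[0]}$ up to an additive constant. Since $\mathcal{L}^{[0]}_\beta=\mathcal{T}^{[0]}+\beta\Delta^{\partial M}$, it then follows that $\beta\Delta^{\partial M}h=(\lambda_{2,0,\beta}-c)h$, so $h$ is also an eigenfunction of $\Delta^{\partial M}$. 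Conversely, if all three eigen-relations hold, both steps are equalities. Here "$f_{|\partial M}$ is an eigenfunction" should be read modulo constants, or one may normalize $f$ to have zero boundary mean.

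The main obstacle is the min–max step for the second eigenvalue, together with the equality characterization. Theorem~\ref{thm:wentbc} only states the characterization of $\lambda_{1}$, so I would spell out the restriction to forms orthogonal to constants via the Hilbert basis in the Remark following it. I would also check the normal-orientation sign in Green's formula so that $\int_M|df|^2=-\int_{\partial M}f\,\partial_\nu f$ with the inward normal $\nu$.
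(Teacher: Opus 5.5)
Your argument is correct and is essentially the paper's proof: normalize $f$, use it as a test function for $\lambda_{2,0,\beta}(M)$, bound $\|f\|^2_{L^2(\partial M)}$ from below via Green's formula and Cauchy--Schwarz, and read off the equality case from these two inequalities. Your normalization by the boundary mean, i.e.\ $\int_{\partial M} f\,d\mu_g=0$, is the one that actually makes $f$ admissible; the paper writes $\int_M f\,d\mu_g=0$, which is a slip. Your reading of the equality statement modulo constants is likewise the correct interpretation.
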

{\it Proof.} We adapt an idea from the proof of \cite[Prop. 14]{RS_DtN2012}. Without loss of generality, we may assume that $\int_M f\, d \mu_g=0$, so that $f$ can be taken as a test function for $\lambda_{2,0,\beta}(M)$. Thus
\begin{equation}
\label{char_lambda_2_0}    
\lambda_{2,0,\beta}(M) \, \int_{\partial M} f^2 \, d \mu_g \le
\int_M |df|^2 \, d \mu_g +\beta \, \int_{\partial M}|d^{\partial M} f|^2 \, d \mu_g.\end{equation}
On the other hand, by applying the integration by parts formula and the Cauchy-Schwarz inequality, we obtain
\begin{equation}
\label{harm_C-S}\int_M |df|^2 \, d \mu_g=\int_M f \, \underbrace{\Delta f}_{0} \, d \mu_g - \int_{\partial M} f \, \dfrac{\partial f}{\partial \nu} \, d \mu_g \le \left(\int_{\partial M} f^2 \, d \mu_g\right)^{\frac{1}{2}} \left(\int_{\partial M}  \big(\dfrac{\partial f}{\partial \nu}\big)^2 \, d \mu_g\right)^{\frac{1}{2}}.
\end{equation}
Therefore, $\displaystyle \int_{\partial M} f^2 \, d \mu_g \ge \dfrac{\displaystyle\left(\int_M |df|^2 \, d \mu_g\right)^2}{\displaystyle\int_{\partial M}  \big(\dfrac{\partial f}{\partial \nu}\big)^2 \, d \mu_g}$. We deduce the estimate by substituting the last inequality into \eqref{char_lambda_2_0}.
Equality holds in \eqref{upp_bound_lambda_2_0} if and only if equality is achieved in both \eqref{char_lambda_2_0} and \eqref{harm_C-S}. Equivalently, $f_{|_{\partial M}}$ is simultaneously an eigenfunction of $\mathcal{L}_\beta^{[0]}$ corresponding to the eigenvalue $\lambda_{2,0,\beta}(M)$ and an eigenfunction of $\mathcal{T}^{[0]}$. In this case, $f_{|_{\partial M}}$ is also an eigenfunction of  $\Delta^{\partial M}$. 
\hfill$\square$ 
\begin{remark}
    The extension of Proposition~\ref{up_bound_harm} to differential forms, as in \cite[Prop. 14]{RS_DtN2012}, is more delicate. Although the argument used to prove~\eqref{upp_bound_lambda_2_0} can be adapted, the analogue of \eqref{char_lambda_2_0} contains an additional term involving the boundary codifferential, which is difficult to control.
\end{remark}
As a consequence of Proposition \ref{up_bound_harm}, we obtain the following result. 
\begin{cor}
    Let $f$ be a function on $M$ such that $df$ is a nontrivial parallel form. Then, \eqref{upp_bound_lambda_2_0} holds. Moreover, if the mean curvature $H$ of the boundary is constant, then, equality occurs in \eqref{upp_bound_lambda_2_0} if and only if $f_{|_{\partial M}}$ is an eigenfunction of $\mathcal{L}_\beta^{[0]}$ associated with the eigenvalue $\lambda_{2,0,\beta}(M)$. 
\end{cor}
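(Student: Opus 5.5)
The first claim follows directly from Proposition~\ref{up_bound_harm}. If $df$ is parallel, then $\nabla df=0$, so $\Delta f=\delta df=-\mathrm{tr}\,\nabla df=0$ and $f$ is harmonic. Since $df$ is nontrivial, $df\ne 0$, and the hypotheses of Proposition~\ref{up_bound_harm} hold. Hence \eqref{upp_bound_lambda_2_0} holds.

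For the equality case, Proposition~\ref{up_bound_harm} says equality holds if and only if $f_{|_{\partial M}}$ is an eigenfunction of $\mathcal{L}^{[0]}_\beta$ for $\lambda_{2,0,\beta}(M)$ and also an eigenfunction of $\mathcal{T}^{[0]}$ and $\Delta^{\partial M}$. So it suffices to show that when $H$ is constant, $f_{|_{\partial M}}$ is an eigenfunction of $\mathcal{L}^{[0]}_\beta$ if and only if it is an eigenfunction of both $\mathcal{T}^{[0]}$ and $\Delta^{\partial M}$. One direction is trivial: eigenfunctions of both summands (after the normalization $\int_M f=0$, compatible since $f+c$ still has parallel differential) are eigenfunctions of the sum. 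For the converse, I will compute $\mathcal{T}^{[0]}f=-\partial_\nu f=-\langle df,\nu\rangle$ and $\Delta^{\partial M}f$ explicitly using the parallelism of $df$.

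Write $X=\nabla f$ (a parallel vector field) and decompose it along $\partial M$ as $X=X^T+\langle X,\nu\rangle\nu$, with $X^T=\nabla^{\partial M}f$. Using the Gauss formula and $\nabla X=0$, one gets the following.
\begin{enumerate}
\item Tangential derivative of the normal component: $d^{\partial M}\langle X,\nu\rangle = \langle X,\nabla_\cdot\nu\rangle = -S(X^T)$.
\item Tangential divergence: $\mathrm{div}^{\partial M}X^T = -\langle X,\nu\rangle\,\mathrm{tr}(S)=-nH\langle X,\nu\rangle$, up to a sign fixed by the convention $S=-\nabla\nu$ with $\nu$ inward.
\end{enumerate}
From the second identity, $\Delta^{\partial M} f=-\mathrm{div}^{\partial M}X^T = \pm nH\,\partial_\nu f$ (sign to be checked). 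With $H$ constant this gives $\Delta^{\partial M}f = c\,\mathcal{T}^{[0]}f$ for a constant $c=\pm nH$. Therefore
\[
\mathcal{L}^{[0]}_\beta f=(1+\beta c)\,\mathcal{T}^{[0]}f=\frac{1+\beta c}{c}\,\Delta^{\partial M}f \quad (c\neq 0).
\]
If $\mathcal{L}^{[0]}_\beta f=\lambda f$ and $1+\beta c\ne0$, then $\mathcal{T}^{[0]}f=\frac{\lambda}{1+\beta c}f$, and $\Delta^{\partial M}f=c\,\mathcal{T}^{[0]}f$ is also proportional to $f$. This gives both eigenfunction properties, and Proposition~\ref{up_bound_harm} then yields equality.

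The main obstacle is the degenerate cases. If $c=0$ (that is, $H=0$), then $\Delta^{\partial M}f=0$, so $f$ is constant on the compact boundary; harmonicity then forces $f$ to be constant on $M$, contradicting $df\neq0$. So $H\neq0$ and $c\ne0$ automatically. If $1+\beta c=0$, then $\mathcal{L}^{[0]}_\beta f=0$, which would require $f_{|_{\partial M}}$ to be constant, again impossible. I expect the careful part to be the sign bookkeeping in the divergence formula, but the argument does not depend on the sign.
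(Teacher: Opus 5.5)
Your proof is correct and follows the paper's route. Parallelism gives harmonicity, and the Gauss formula with $\nabla^2 f=0$ gives $\Delta^{\partial M}f=-nH\,\partial_\nu f$, so your constant is $c=nH$. Hence $\mathcal{L}^{[0]}_\beta f=-(1+\beta nH)\,\partial_\nu f$, an eigenfunction of $\mathcal{L}^{[0]}_\beta$ is automatically one of $\mathcal{T}^{[0]}$ and $\Delta^{\partial M}$, and Proposition~\ref{up_bound_harm} applies; your exclusion of $1+\beta nH=0$ even covers a division the paper performs silently.

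One small correction: $H=0$ is not impossible when $\partial M$ is disconnected. For example, take $M=[0,1]\times N$ with $f=t$, where $f_{|_{\partial M}}$ is only locally constant. You never need $c\neq 0$, though, since in that case $\mathcal{L}^{[0]}_\beta f=\lambda f$ gives $\mathcal{T}^{[0]}f=\lambda f$ and $\Delta^{\partial M}f=0$ directly.
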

{\it Proof.} First, the function $f$ is harmonic as $df$ is parallel. Therefore, it remains only to prove the sufficient condition for equality. To this end, suppose that $f_{|_{\partial M}}$ is an eigenfunction of $\mathcal{L}_\beta^{[0]}$ associated with  $\lambda_{2,0,\beta}(M)$. By the fact that $f$ is harmonic and $\nabla^2 f=0$, we obtain $\Delta^{\partial M} f=-nH \dfrac{\partial f}{\partial \nu}$. We then deduce that 
$$-\left(1+\beta \, nH\right)\dfrac{\partial f}{\partial \nu}=\lambda_{2,0,\beta}(M) f,$$
which means that $f$ is an eigenfunction of $T^{[0]}$ and then of $\Delta^{\partial M}$ as well. Therefore, the statement follows from Proposition~\ref{up_bound_harm} \hfill$\square$ 
  \begin{remark}
By  Proposition~\ref{up_bound_harm}, we obtain, under suitable hypotheses, a non-sharp upper bound for the first positive eigenvalue of $\mathcal{L}^{[0]}_\beta$ in terms of the isoperimetric ratio. More precisely, suppose  that there exists a nonconstant harmonic function $f$ on $M$. Then 
    \begin{equation}
        \label{upp_bound_lambda_2_0_relaxed}
           \lambda_{2,0,\beta}(M) < \dfrac{\int_{\partial M} |df|^2 \, d \mu_g}{\int_{M} |df|^2 \, d \mu_g} +\frac{\beta}{4} \, \left(\dfrac{\int_{\partial M} |df|^2 \, d \mu_g}{\int_{M} |df|^2 \, d \mu_g}\right)^2.
   \end{equation}
   In particular, if $df$ is parallel, then
   $$\lambda_{2,0,\beta}(M) < \dfrac{\vol(\partial M)}{\vol(M)} + \frac{\beta}{4}\, \left(\dfrac{\vol(\partial M)}{\vol(M)}\right)^2.$$
To prove \eqref{upp_bound_lambda_2_0_relaxed}, we first observe that along the boundary
$|df|^2
=|d^{\partial M} f|^2 + \big(\dfrac{\partial f}{\partial \nu}\big)^2$.
Consequently, 
\begin{equation}
\label{upp_bound_2_0_1st_term}
    \displaystyle \int_{\partial M}  \big(\dfrac{\partial f}{\partial \nu}\big)^2 \, d \mu_g \le \displaystyle \int_{\partial M} |df|^2 \, d \mu_g.
\end{equation} 
Also, by the inequality $ab \le \frac{1}{2}(a^2+b^2)$ for any real numbers $a$ and $b$ applied to $a=\left(\displaystyle\int_{\partial M} |d^{\partial M} f|^2 \, d \mu_g\right)^{\frac{1}{2}}$ and $b=\left(\displaystyle\int_{\partial M}  \big(\dfrac{\partial f}{\partial \nu}\big)^2 \, d \mu_g\right)^{\frac{1}{2}}$, we deduce that 
\begin{equation}
\label{upp_bound_2_0_2nd_term}\displaystyle\int_{\partial M}  |d^{\partial M}f|^2 \, d \mu_g \, \displaystyle\int_{\partial M}  \big(\dfrac{\partial f}{\partial \nu}\big)^2 \, d \mu_g \le 
\frac{1}{4} \left(\int_{\partial M} |df|^2 \, d \mu_g \right)^2.
\end{equation}
Therefore, we conclude~\eqref{upp_bound_lambda_2_0_relaxed} by combining the inequalities \eqref{upp_bound_2_0_1st_term}, \eqref{upp_bound_2_0_2nd_term} and \eqref{upp_bound_lambda_2_0}. The inequality is strict. Otherwise, the equality cases in \eqref{upp_bound_2_0_1st_term}, 
and \eqref{upp_bound_2_0_2nd_term} would imply $\int_{\partial M}  \big(\dfrac{\partial f}{\partial \nu}\big)^2 \, d \mu_g=0$. In view of \eqref{harm_C-S}, this contradicts the assumption that $df$ is nontrivial. If we assume $df$ parallel, then its pointwise norm is constant. Therefore, the second estimate follows directly from the first one. \\ \\
For other upper bounds involving the isoperimetric ratio, we refer to \cite[Thm. 1.4]{FangXia2021}
and 
\cite[Chap. 4]{Ndiaye2022}. 
\end{remark}
   Next, recall that a compact Riemannian manifold $M$ with smooth nonempty boundary $\partial M$ is called a \emph{harmonic domain} \cite[p. 893]{RS_DtN2012} if it supports a solution $f$ to the Serrin problem
\begin{equation}\label{eq:scalarSerrin}
\left\{\begin{array}{rrl}\Delta f&=1&\textrm{on }M\\ f&=0&\textrm{on }\partial M\\ \displaystyle \frac{\partial f}{\partial \nu}&=c&
\textrm{on }\partial M\end{array}\right.
\end{equation}
for some constant $c\in\R$. It is straightforward that in this case $c=\frac{\mathrm{Vol}(M)}{\mathrm{Vol}(\partial M)}$.
 \begin{prop}
 \label{upp_bound_n_harmonic}
     Let $M$ be a harmonic domain and $f$ be a solution to the Serrin problem \eqref{eq:scalarSerrin}. 
     The real number $\lambda = \frac{\mathrm{Vol}(\partial M)}{\mathrm{Vol}(M)}$ is an eigenvalue of $\mathcal{L}_\beta^{[n]}$ 
     In particular, 
     $$\lambda_{1,n,\beta}(M) \le \frac{\mathrm{Vol}(\partial M)}{\mathrm{Vol}(M)}.$$
 \end{prop}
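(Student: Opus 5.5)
The plan is to exhibit an explicit eigenform, namely the Riemannian volume form $\mathrm{vol}_{\partial M}$ of the boundary, and to show that $\mathcal{L}^{[n]}_\beta(\mathrm{vol}_{\partial M})=\frac{\mathrm{Vol}(\partial M)}{\mathrm{Vol}(M)}\,\mathrm{vol}_{\partial M}$. Since $\partial M$ is closed of dimension $n$, $\mathrm{vol}_{\partial M}$ is closed, and it is co-closed because its Hodge dual is a constant function. Hence $\Delta^{\partial M}\mathrm{vol}_{\partial M}=0$, so the $\beta$-term drops out and everything reduces to computing the Dirichlet-to-Neumann operator $\mathcal{T}^{[n]}$ on $\mathrm{vol}_{\partial M}$. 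The inequality $\lambda_{1,n,\beta}(M)\le \frac{\mathrm{Vol}(\partial M)}{\mathrm{Vol}(M)}$ then follows at once, since $\lambda_{1,n,\beta}(M)$ is the smallest eigenvalue in Theorem~\ref{thm:wentbc}.

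To compute $\mathcal{T}^{[n]}(\mathrm{vol}_{\partial M})$, I will build the harmonic tangential extension from the Serrin solution $f$ by setting $\omega:=\frac{\varepsilon}{c}\star df$, where $c=\frac{\mathrm{Vol}(M)}{\mathrm{Vol}(\partial M)}$ and $\varepsilon=\pm1$ is a sign fixed by orientation conventions. The key steps, in order, are the following.

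\begin{enumerate}
\item \textbf{Harmonicity.} $\Delta$ commutes with $\star$ and $d$, so $\Delta\omega=\frac{\varepsilon}{c}\star d(\Delta f)=\frac{\varepsilon}{c}\star d(1)=0$.
\item \textbf{Boundary values.} Since $f=0$ on $\partial M$, we have $df=\frac{\partial f}{\partial\nu}\,\nu^\flat=c\,\nu^\flat$ along $\partial M$. Hence $\omega=\varepsilon\star\nu^\flat=\pm\,\nu\lrcorner\,\mathrm{vol}_M$ there. This form satisfies $\nu\lrcorner\,\omega=0$ and $\iota^*\omega=\mathrm{vol}_{\partial M}$ once $\varepsilon$ is chosen appropriately. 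By uniqueness in \eqref{eq:bvpharmonictangext}, $\omega=\widehat{\mathrm{vol}_{\partial M}}$.
\item \textbf{Normal derivative.} $d\omega=\frac{\varepsilon}{c}\,d\star df=\pm\frac{1}{c}\star\delta df=\pm\frac{1}{c}\star\Delta f=\pm\frac{1}{c}\,\mathrm{vol}_M$. Then $-\nu\lrcorner\,d\omega=\pm\frac1c\,\iota^*(\nu\lrcorner\,\mathrm{vol}_M)=\pm\frac1c\,\mathrm{vol}_{\partial M}$.
\end{enumerate}

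The step I expect to be the only real obstacle is the sign bookkeeping in the last step: the conventions for $\star$ on $n$-forms in dimension $n+1$, the orientation of $\partial M$ induced by the inward normal, and $\delta=\pm\star d\star$. Rather than track these, I would fix the final sign a posteriori by positivity. By \eqref{eq:Lphiphi} with $\beta=0$, $\int_{\partial M}\langle\mathcal{T}^{[n]}\phi,\phi\rangle=\mathcal{D}(\hat\phi,\hat\phi)\ge0$, and here $\mathcal{D}(\omega,\omega)=\|d\omega\|^2>0$ because $d\omega=\pm\frac1c\mathrm{vol}_M\neq0$. So $\mathcal{T}^{[n]}\mathrm{vol}_{\partial M}=+\frac1c\,\mathrm{vol}_{\partial M}$.

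As a consistency check, $\mathcal{D}(\omega,\omega)=\frac{\mathrm{Vol}(M)}{c^2}$ and $\|\mathrm{vol}_{\partial M}\|^2=\mathrm{Vol}(\partial M)$, whose ratio is $\frac{\mathrm{Vol}(M)}{c^2\,\mathrm{Vol}(\partial M)}=\frac1c$, as expected. Combining this with $\Delta^{\partial M}\mathrm{vol}_{\partial M}=0$ gives $\mathcal{L}^{[n]}_\beta\mathrm{vol}_{\partial M}=\frac{\mathrm{Vol}(\partial M)}{\mathrm{Vol}(M)}\mathrm{vol}_{\partial M}$, which proves both claims.
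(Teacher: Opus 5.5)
Your argument is correct and is essentially the paper's proof transported by the Hodge star. The paper stays on the dual side: it observes that $df$ is the harmonic normal extension of the constant $c=\partial f/\partial\nu$, so that $\mathcal{L}^{[0]}_{\beta,D}(c)=\iota^*(\delta df)=\iota^*(\Delta f)=1$ gives the eigenvalue $1/c$ without any sign bookkeeping, and then uses $\lambda^D_{k,0,\beta}(M)=\lambda_{k,n,\beta}(M)$. You instead build $\frac{\varepsilon}{c}\star df$ as the harmonic tangential extension of $\mathrm{vol}_{\partial M}$, which shows the eigenform explicitly and avoids that duality identity; the price is the signs, and your positivity argument handles them correctly, provided you take the orientation of $\partial M$ induced by $M$ so that $\varepsilon$ is a single global sign even when $\partial M$ is disconnected.
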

{\it Proof.}  We introduce the function $\phi := \displaystyle \frac{\partial f}{\partial \nu}=c$ on $\partial M$. It is clear that the $1$-form $df$ is the harmonic normal extension of $\phi$. Therefore, 
$$\mathcal{L}_{\beta,D}^{[0]} (\phi)=\iota^* (\delta df) +\beta \, \Delta^{\partial M} \phi =1.$$
It follows that $\phi$ is an eigenfunction of $\mathcal{L}_{\beta,D}^{[0]}$ associated with the eigenvalue $\frac{1}{c}$, which implies that $\frac{\mathrm{Vol}(\partial M)}{\mathrm{Vol}(M)}$ is an eigenvalue of $\mathcal{L}_{\beta}^{[n]}$. The proof is then complete.
\hfill$\square$

\vspace{0.3em}

Let 
$$\|S\|^2_p := \max_{i_1 < \dots < i_p} (\eta_{i_1}^2 + \dots + \eta_{i_P}^2),$$ where $\eta_1, \dots , \eta_n$ are the principal curvatures. We recall the following pointwise estimates from \cite{RaulotSavo2011}, valid for every $\phi \in \Omega^p(\partial M)$:
\begin{equation}
    \label{min_Sp}
    \<S^{[p]} (\phi),\phi\> \ge \sigma_p |\phi|^2,
\end{equation}
\begin{equation}
    \label{maj_Sp}
    |S^{[p]} (\phi)|^2 \le  p \, \|S\|_p^2 \, |\phi|^2.
\end{equation}
Recall also the following identities \cite[Lem.~18]{RaulotSavo2011}
\begin{equation}
   \label{eq:d_delta_RS} 
\left\{
\begin{array}{ll}
 \delta^{{\partial M}}(\iota^* \omega)= \iota^* (\delta\omega)+i_\nu\nabla_{\nu}\omega+ S^{[p-1]}(\nu\lrcorner\, \omega)-nH\nu\lrcorner\,\omega \\
d^{{\partial M}}(\nu\lrcorner\,\omega)= -\nu\lrcorner\, d\omega + \iota^* (\nabla_{\nu}\omega) - S^{[p]}(\iota^* \omega).
\end{array}
\right.
\end{equation}
\begin{prop}
\label{upp_bound_arb_degree_harmonic}
    Let $M$ be a harmonic domain. Assume that $M$ admits a parallel $p$-form $\omega_0$. Then
    $$\min(\lambda_{1,p-1,\beta}(M),\lambda_{1,n-p,\beta}(M)) \le \frac{\mathrm{Vol}(\partial M)}{\mathrm{Vol}(M)} - \min(\sigma_p ,\sigma_{n+1-p} )  +\beta \, \max\left( (n-p+1)\|S\|^2_{n+1-p}, p \|S\|^2_p  \right).$$
\end{prop}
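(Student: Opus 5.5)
The plan is to use one pair of test forms built from the Serrin solution $f$ of \eqref{eq:scalarSerrin} and the parallel form $\omega_0$, and then a ratio-of-sums argument. On $\partial M$ we have $f=0$, so $\iota^*df=0$ and $df=c\,\nu^\flat$ there, up to the sign convention for $\nu$, with $c=\mathrm{Vol}(M)/\mathrm{Vol}(\partial M)$. I would take the two forms
$$\eta:=\nabla f\lrcorner\,\omega_0\in\Omega^{p-1}(M),\qquad \xi:=df\wedge\omega_0\in\Omega^{p+1}(M).$$
On the boundary, $\nu\lrcorner\,\eta=c\,\nu\lrcorner\,\nu\lrcorner\,\omega_0=0$, so $\eta$ is admissible in \eqref{variat_char} in degree $p-1$, with $\iota^*\eta=c\,\iota^*(\nu\lrcorner\,\omega_0)$. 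Similarly $\iota^*\xi=0$, so $\xi$ is admissible in \eqref{var_char_dual} in degree $p$, with $\nu\lrcorner\,\xi=c\,\iota^*\omega_0$. Since $\lambda^D_{1,p,\beta}(M)=\lambda_{1,n-p,\beta}(M)$, writing $N(\cdot)$ for the numerators of the two Rayleigh quotients, we get
$$\min(\lambda_{1,p-1,\beta},\lambda_{1,n-p,\beta})\le \frac{N(\eta)+N(\xi)}{\|\iota^*\eta\|^2_{L^2(\partial M)}+\|\nu\lrcorner\,\xi\|^2_{L^2(\partial M)}},$$
and the denominator equals $c^2\int_{\partial M}|\omega_0|^2=c^2|\omega_0|^2\mathrm{Vol}(\partial M)$, because $|\omega_0|$ is constant.

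\emph{Interior terms.} Because $\omega_0$ is parallel, $d\eta$, $\delta\eta$, $d\xi$ and $\delta\xi$ are all expressed through $\nabla^2 f$ acting as a derivation on $\omega_0$. For instance $d\xi=0$ and $\delta\xi=-(\Delta f)\omega_0+(\nabla^2 f)^{[p]}$-type terms, and similarly for $\eta$. I would not expand the squares pointwise. Instead I would integrate by parts with \eqref{partial_integration_laplacian}, or compute $\mathcal D(\eta,\eta)+\mathcal D(\xi,\xi)$ directly. I expect the Hessian cross terms to combine, via the algebraic identity $|X\lrcorner\,\omega_0|^2+|X^\flat\wedge\omega_0|^2=|X|^2|\omega_0|^2$ and its differentiated version, into $\int_M\Delta f\,\langle\ldots\rangle$ plus boundary integrals. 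Using $\Delta f=1$ should then give the main term $c\,|\omega_0|^2\mathrm{Vol}(\partial M)$. The boundary integrals, in which $\nabla_\nu df$ restricted to $T\partial M$ equals $c$ times the shape operator, produce $-c^2\int_{\partial M}\big(\langle S^{[p]}\iota^*\omega_0,\iota^*\omega_0\rangle+\langle S^{[n+1-p]}\star\ldots\rangle\big)$. Bounding these below by \eqref{min_Sp}, applied to $\iota^*\omega_0$ and to the $(p-1)$-form $\nu\lrcorner\,\omega_0$ (equivalently, after a Hodge star, to an $(n+1-p)$-form), gives $-c^2\min(\sigma_p,\sigma_{n+1-p})|\omega_0|^2$ integrated over $\partial M$. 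Dividing by the denominator yields $\frac{\mathrm{Vol}(\partial M)}{\mathrm{Vol}(M)}-\min(\sigma_p,\sigma_{n+1-p})$. This computation is the step I expect to be the main obstacle: the sign bookkeeping in the Hessian terms and the exact cancellation leading to $\Delta f=1$. I would first try to follow the computation of Raulot--Savo for the Dirichlet-to-Neumann case \cite{RS_DtN2012}, which is exactly the $\beta=0$ part of this step.

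\emph{Boundary ($\beta$) terms.} By \eqref{eq:d_delta_RS} applied to $\omega_0$, which is closed, coclosed and parallel, we get $d^{\partial M}(\nu\lrcorner\,\omega_0)=-S^{[p]}(\iota^*\omega_0)$ and $\delta^{\partial M}(\iota^*\omega_0)=S^{[p-1]}(\nu\lrcorner\,\omega_0)-nH\,\nu\lrcorner\,\omega_0$. The analogous formulas hold for $\delta^{\partial M}(\nu\lrcorner\,\omega_0)$ and $d^{\partial M}\iota^*\omega_0$, which vanish or are again given by shape-operator terms. Hence the $\beta$-parts of $N(\eta)$ and $N(\xi)$ are $c^2\beta$ times integrals of $|S^{[p]}\iota^*\omega_0|^2$ and of the analogous term for $\nu\lrcorner\,\omega_0$, rewritten via $\star_{\partial M}$ as an $(n+1-p)$-form. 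By \eqref{maj_Sp} these are at most $p\|S\|_p^2|\iota^*\omega_0|^2$ and $(n-p+1)\|S\|^2_{n+1-p}|\nu\lrcorner\,\omega_0|^2$ respectively. Summing and using $|\iota^*\omega_0|^2+|\nu\lrcorner\,\omega_0|^2=|\omega_0|^2$ bounds the $\beta$-contribution by $\beta\max\big((n-p+1)\|S\|^2_{n+1-p},p\|S\|_p^2\big)$ times the denominator. Combining the interior and boundary estimates gives the stated inequality.
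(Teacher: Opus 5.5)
Your proposal is correct and is essentially the paper's proof. Your $\xi=df\wedge\omega_0$ is exactly the paper's test form $d(f\omega_0)$ in \eqref{var_char_dual}, and your $\eta=\nabla f\lrcorner\,\omega_0$ is, up to sign, the Hodge dual of the paper's second test form $d(f\star\omega_0)$, used for $\lambda^D_{1,n+1-p,\beta}(M)=\lambda_{1,p-1,\beta}(M)$; the paper then sums the two inequalities and bounds them with \eqref{eq:d_delta_RS}, \eqref{min_Sp} and \eqref{maj_Sp} just as you describe. The interior step you flagged needs no Hessian bookkeeping: since $\Delta(f\omega_0)=\omega_0$ and $\eta=-\delta(f\omega_0)$, one has $d\delta\xi=0$ and $\delta d\eta=0$, so a single integration by parts gives $\|\delta\xi\|^2_{L^2(M)}=c\|\iota^*\omega_0\|^2_{L^2(\partial M)}-c^2(S^{[p]}\iota^*\omega_0,\iota^*\omega_0)_{L^2(\partial M)}$, and the analogous boundary expression for $\|d\eta\|^2_{L^2(M)}$.
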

{\it Proof.} Let $f$ be the solution to \eqref{eq:scalarSerrin}, and set $\omega:=f\, \omega_0$. By \cite[Prop.~2.1]{EGHM}, $\omega$ satisfies $\Delta \omega= \omega_0$ on $M$, and,  $\omega_{|_{\partial M}}=0$, $\nu \lrcorner\, d \omega=c \iota^* \omega_0$ and $\iota^* (\delta \omega)=-c \, \nu \lrcorner\, \omega_0$ on $\partial M$.  
Since $\iota^* d \omega=0$, we can use $d \omega$ as a test form in the variational characterization~\eqref{var_char_dual} of $\lambda_{1,p,\beta}^D(M)$. Therefore
\begin{equation}
\label{lambda_1p_D-dom_harm}
  \|\nu \lrcorner\, d \omega\|^2_{L^2(\partial M)}\,   \lambda_{1,p,\beta}^D(M)\le  \|\delta d \omega\|^2_{L^2(M)}+\beta \,\left(\|d^{\partial M} (\nu \lrcorner\, d \omega)\|^2_{L^2(\partial M)}+\|\delta^{\partial M} (\nu \lrcorner\, d \omega)\|^2_{L^2(\partial M)}\right). \end{equation}
    By the partial integration formula
    \begin{align*}
        \|\delta d \omega\|^2_{L^2(M)}&=\int_M \< \delta d \omega, \delta d \omega \> \, d \mu_g \\
        &= \int_M \<\underbrace{d \Delta\omega}_{0} , d \omega \> \, d \mu_g + \int_{\partial M} \< \iota^* \delta d \omega , \nu \lrcorner \, d \omega \>\, d \mu_g.
    \end{align*}
Writing $\iota^*(\delta d \omega) = \iota^*(\Delta \omega)- \iota^*(d \delta  \omega)$, and using the properties of $\omega$ recalled above, we obtain $\iota^*(\delta d \omega) = \iota^* \omega_0 + c \, d^{\partial M} (\nu \lrcorner \, \omega_0)$, and \eqref{eq:d_delta_RS} then yields $\iota^*(\delta d \omega) = \iota^* \omega_0 -c \, S^{[p]}(\iota^* \omega_0)$. 
It follows that 
$$ \|\delta d \omega\|^2_{L^2(M)}= c  \|\iota^* \omega_0\|^2_{L^2(\partial M)}-c^2 ( S^{[p]}(\iota^* \omega_0), \iota^* \omega_0 )_{L^2(\partial M)}.$$
Next, $ d^{\partial M} (\nu \lrcorner\, d \omega)=c \iota^* (d \omega_0)=0$. Finally, we consider the term $\|\delta^{\partial M} (\nu \lrcorner\, d \omega)\|^2_{L^2(\partial M)}$. 
\begin{align*}
    \|\delta^{\partial M} (\nu \lrcorner\, d \omega)\|^2_{L^2(\partial M)} =& c^2 \|\delta^{\partial M} (\iota^* \omega_0)\|^2_{L^2(\partial M)} \\
    \stackrel{\eqref{eq:d_delta_RS}}{=}& c^2 \, \|S^{[p-1]}(\nu \lrcorner\, \omega_0) -nH \, \nu \lrcorner \, \omega_0\|^2_{L^2(\partial M)}\\
    =&c^2 \,\|S^{[n+1-p]}(\iota^* \star \omega_0) \|^2_{L^2(\partial M)}.
\end{align*}
The last equality follows from the identity
$\star_{\partial M}S^{[p]}+S^{[n-p]}\star_{\partial M}=nH\, \star _{\partial M}$. In view of the previous computations, inequality~\eqref{lambda_1p_D-dom_harm} becomes
\begin{align*}
     \|
    \iota^*\omega_0\|^2_{L^2(\partial M)}  \lambda_{1,p,\beta}^D(M)\le&  \dfrac{1}{c}\|
    \iota^*\omega_0\|^2_{L^2(\partial M)} - ( S^{[p]}(\iota^* \omega_0), \iota^* \omega_0 )_{L^2(\partial M)} +\beta   \,\|S^{[n+1-p]}(\iota^* \star \omega_0) \|^2_{L^2(\partial M)}\\
     \le & \dfrac{1}{c}\|
    \iota^*\omega_0\|^2_{L^2(\partial M)} -\sigma_p \|
    \iota^*\omega_0\|^2_{L^2(\partial M)} + \beta \, (n+1-p) \|S\|^2_{n+1-p} \, \|
    \nu \lrcorner\, \omega_0\|^2_{L^2(\partial M)}.
    . \end{align*}
     Repeating the above computation with $\star \omega_0$ in place of $\omega_0$, and adding the resulting inequality to the preceding one, we obtain
     \begin{align*}
\min(\lambda_{1,p,\beta}^D(M),\lambda_{1,n+1-p,\beta}^D(M)) \le &  
    \frac{1}{c} - \min(\sigma_p ,\sigma_{n+1-p} )  +\beta \, \max\left( (n+1-p)\|S\|^2_{n+1-p}, p \|S\|^2_p  \right). 
\end{align*}
The desired estimate follows from the preceding inequality and the identity
 $\lambda_{1,q,\beta}^D(M)=\lambda_{1,n-q,\beta}(M)$ for each $q=0, \dots,n$. \hfill$\square$
\begin{remark}
In the particular case where $\beta=0$, we obtain the following estimate for the Dirichlet-to-Neumann eigenvalues when $M$ is a harmonic domain carrying a parallel $p$-form:
$$\min(\nu_{1,p-1}(M),\nu_{1,n-p}(M)) \le \frac{\mathrm{Vol}(\partial M)}{\mathrm{Vol}(M)} - \min(\sigma_p ,\sigma_{n+1-p} ) .$$
This upper bound contains an additional term compared to that in \cite[Thm. 5.2]{EGHM}, which is  sharper when both $\sigma_p$ and $\sigma_{n+1-p}$ are positive.  
\end{remark}

\section{Gap estimate}
\label{Sec:Wentz_gap_estimate}

In this section, we assume the manifold $M^{n+1}$ isometrically immersed 
into $\R^d$ for somr $d \ge n+1$. We will estimate the gap between the first eigenvalues of the Wentzell operator for consecutive degrees. Our approach follows \cite[Thm. 2.3]{guerinisavo2004} (see also \cite[Thm. 5.8]{elchamiginouxhabib21}, \cite[Thm. 4]{RS_DtN2012} and \cite{Michel2019}).
\\\\
We denote by $\iota_0 : M \rightarrow \R^d$ the immersion of $M$ into $\R^d$, $II$ the second fundamental form of the 
immersion. For any normal vector field $N$ to $M$ in $\R^d$, we let $II_N$ be the associated Weingarten map defined by 
\[\< II_N(X),Y\>=\<  N,II(X,Y)\>\]
for all $X,Y$ tangent to $M$. The endomorphism $II_N$ admits a  canonical extension to differential forms, in the same way as the extension of $S$ given in \eqref{ext_S}.
The boundary $\partial M$ is also isometrically immersed into $\R^d$, the immersion being given by $\Bar{\iota}=\iota_0 \circ \iota$. The second fundamental form $\Bar{II}$ of $\Bar{\iota}$ is given by:
$$\Bar{II} (X,Y)= II (X,Y) + S(X,Y),$$
for all vector fields $X,Y$ on $\partial M$, where $S$ is the second fundamental form of $\iota$. 
Consider a local orthonormal frame $\{e_1, \dots , e_n , e_{n+1}=\nu\}$ of $TM$ and a local orthonormal frame $\{\nu_1, \dots , \nu_m\}$ of the normal bundle $T^\perp M$with $m=d-n-1$. The operator $T^{[p]}$ defined on $\Omega^p(M)$ by
\[T^{[p]}:=\sum_{a=1}^m (II^{[p]}_{\nu_a})^2,\]
is independent of the chosen o.n.b. of $T^\perp M$, and is also self-adjoint and nonnegative. We also consider the operator $\Bar{T}^{[p]}$ defined on $\Omega^p(\partial M)$ by
\[\Bar{T}^{[p]}:= (S^{[p]})^2 +\sum_{a=1}^m (\Bar{II}^{[p]}_{\nu_a})^2\]
 Let $x_1, \dots , x_{d}$ be the standard coordinates on $\R^{d}$. For $i=1, \dots, d$, the unit parallel vector field $\partial_{x_i}$ splits into $\partial_{x_i}=\partial_{x_i}^T +\partial_{x_i}^\perp\in T  M \oplus T^\perp  M$. Also, for each $i$, the vector field $\partial_{x_i}^T$ can be written as $\partial_{x_i}^T=\Bar{\partial}_{x_i} + g(\partial_{x_i}^T, \nu) \nu$, where $\Bar{\partial}_{x_i}$ is tangent to the boundary $\partial M$. 
\\\\
The following lemma can be found in \cite{guerinisavo2004,Michel2019}. See also the proof of Thm. 5.8 in \cite{elchamiginouxhabib21}.
\begin{lemma} 
\label{lem:terms_imm_isom}
    For every $p$-form $\omega$ on $M$, we have:
    \begin{enumerate}
        \item $\sum_{i=1}^d |\partial_{x_i}^T\lrcorner \, \omega|^2=p|\omega|^2$.
        \item $\sum_{i=1}^d |d(\partial_{x_i}^T\lrcorner \, \omega)|^2=|\nabla \omega|^2 +\<  T^{[p]} \omega , \omega \> + (p-1)|d \omega|^2$.
        \item $\sum_{i=1}^d |\delta(\partial_{x_i}^T\lrcorner \,  \omega)|^2= (p-1) |\delta \omega|^2$.
    \end{enumerate}
\end{lemma}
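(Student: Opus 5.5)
The plan is to prove all three identities pointwise. I will fix a point $x\in M$, a local orthonormal frame $\{e_1,\dots,e_{n+1}\}$ of $TM$ and an orthonormal frame $\{\nu_a\}$ of $T^\perp M$. The only input is that $\{\partial_{x_i}\}_{i=1}^d$ is an orthonormal basis of $\R^d$ made of parallel vector fields.

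The algebraic backbone consists of three completeness relations coming from the decomposition $\partial_{x_i}=\partial_{x_i}^T+\partial_{x_i}^\perp$. For $X,Y$ tangent to $M$ and $N,N'$ normal to $M$,
$$\sum_{i}\<\partial_{x_i}^T,X\>\<\partial_{x_i}^T,Y\>=\<X,Y\>,\qquad \sum_i \<\partial_{x_i}^\perp,N\>\<\partial_{x_i}^\perp,N'\>=\<N,N'\>,\qquad \sum_i\<\partial_{x_i}^T,X\>\,\partial_{x_i}^\perp=0 .$$
The first and third follow by expanding $X$ in the basis $\partial_{x_i}$; the second follows likewise for $N$.

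Differentiating $0=\bar\nabla\partial_{x_i}$ and taking the tangential part gives $\nabla_X\partial_{x_i}^T=II_{\partial_{x_i}^\perp}(X)$. Up to a sign convention that I will fix once and for all, this endomorphism is symmetric.

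For item 1, I will write $\partial_{x_i}^T=\sum_j\<\partial_{x_i}^T,e_j\>e_j$ and sum over $i$ using the first relation. This gives $\sum_j|e_j\lrcorner\,\omega|^2=p|\omega|^2$.

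For item 3, I will compute
$$\delta(V\lrcorner\,\omega)=-\sum_j e_j\lrcorner\,(\nabla_{e_j}V\lrcorner\,\omega)+V\lrcorner\,\delta\omega$$
with $V=\partial_{x_i}^T$. The first term is $-\sum_{j,k}\<\nabla_{e_j}V,e_k\>\,\omega(e_k,e_j,\dots)$. This is a contraction of the symmetric tensor $\nabla V=II_{\partial_{x_i}^\perp}$ with the skew form $\omega$, so it vanishes. Hence $\delta(\partial_{x_i}^T\lrcorner\,\omega)=\partial_{x_i}^T\lrcorner\,\delta\omega$, and item 1 applied to the $(p-1)$-form $\delta\omega$ gives $(p-1)|\delta\omega|^2$.

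For item 2, I will use Cartan's formula $d(V\lrcorner\,\omega)=\mathcal{L}_V\omega-V\lrcorner\, d\omega$ together with $\mathcal{L}_V\omega=\nabla_V\omega+(\nabla V)^{[p]}\omega$. Here $(\nabla V)^{[p]}$ is the canonical extension as in \eqref{ext_S}, and the sign comes from the symmetry of $\nabla V$. This yields
$$d(\partial_{x_i}^T\lrcorner\,\omega)=\nabla_{\partial_{x_i}^T}\omega-\partial_{x_i}^T\lrcorner\, d\omega+II^{[p]}_{\partial_{x_i}^\perp}\omega .$$
I will then square this expression and sum over $i$, handling the terms as follows.
\begin{itemize}
\item By the first relation, $\sum_i|\nabla_{\partial_{x_i}^T}\omega|^2=|\nabla\omega|^2$.
\item By item 1 applied in degree $p+1$, $\sum_i|\partial_{x_i}^T\lrcorner\, d\omega|^2=(p+1)|d\omega|^2$.
\item By the second relation and linearity of $N\mapsto II_N$, $\sum_i|II^{[p]}_{\partial_{x_i}^\perp}\omega|^2=\sum_a|II^{[p]}_{\nu_a}\omega|^2=\<T^{[p]}\omega,\omega\>$.
\item The mixed term $-2\sum_i\<\nabla_{\partial_{x_i}^T}\omega,\partial_{x_i}^T\lrcorner\, d\omega\>$ equals $-2\sum_j\<\nabla_{e_j}\omega,e_j\lrcorner\, d\omega\>=-2\<\sum_j e_j^*\wedge\nabla_{e_j}\omega,d\omega\>=-2|d\omega|^2$.
\item The two mixed terms involving $II^{[p]}_{\partial_{x_i}^\perp}$ are bilinear in $(\partial_{x_i}^T,\partial_{x_i}^\perp)$, so they vanish by the third relation.
\end{itemize}
Adding these contributions gives $|\nabla\omega|^2+\<T^{[p]}\omega,\omega\>+(p-1)|d\omega|^2$, which is item 2.

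I expect the main obstacle to be bookkeeping rather than any conceptual difficulty. Specifically, one has to get the correct sign and the exact form of the curvature-type term in $\mathcal{L}_V\omega-\nabla_V\omega$. One also has to make sure that the cross terms really are linear in $\partial_{x_i}^T$ and linear in $\partial_{x_i}^\perp$ separately, since only then does the third completeness relation kill them. If the sign of the $II^{[p]}$ term turns out opposite to the one written above, this is harmless: that term enters only through its square and through cross terms that vanish anyway.
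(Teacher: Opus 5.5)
Your proof is correct and follows the standard argument of the references the paper cites for this lemma (Gu\'erini--Savo, Michel, El Chami--Ginoux--Habib); the paper itself gives no proof. As there, the ingredients are $\nabla_X\partial_{x_i}^T=II_{\partial_{x_i}^\perp}(X)$, Cartan's formula with $\mathcal{L}_V=\nabla_V+(\nabla V)^{[p]}$, and the completeness relations, the third of which kills the cross terms. The one slip is harmless: the correct formula is $\delta(V\lrcorner\,\omega)=-\sum_j e_j\lrcorner(\nabla_{e_j}V\lrcorner\,\omega)-V\lrcorner\,\delta\omega$, with a minus sign on the last term, but this does not matter since only $|\partial_{x_i}^T\lrcorner\,\delta\omega|^2$ enters.
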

Lemma \ref{lem:terms_imm_isom} yields the following result.
\begin{lemma} 
\label{lem:terms_imm_isom_bound}
Let $\omega$ be a $p$-form on $M$ such that $\nu  \lrcorner \, \omega=0$. Then, along the boundary, we have:
\begin{enumerate}
    \item $\sum_{i=1}^d |\iota^*(\partial_{x_i}^T\lrcorner \, \omega)|^2=p|\iota^* \omega|^2$.
    \item $\sum_{i=1}^d |d^{\partial M} \iota^*(\partial_{x_i}^T\lrcorner \, \omega)|^2=|\nabla^{\partial M} \iota^* \omega|^2 + (p-1)|d^{\partial M} (\iota^*\omega)|^2 +\<  \Bar{T}^{[p]} (\iota^* \omega) , \iota^* \omega \>$.
        \item $\sum_{i=1}^d |\delta^{\partial M} \iota^*(\partial_{x_i}^T\lrcorner \,  \omega)|^2= (p-1) |\delta^{\partial M} \iota^* \omega|^2$.
\end{enumerate}
    \end{lemma}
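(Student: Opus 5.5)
The plan is to deduce Lemma~\ref{lem:terms_imm_isom_bound} directly from Lemma~\ref{lem:terms_imm_isom}. The idea is to apply the latter not to $M$ but to the boundary $\partial M$, viewed as an $n$-dimensional Riemannian manifold isometrically immersed in $\R^d$ via $\Bar{\iota}=\iota_0\circ\iota$. The form used will be $\phi:=\iota^*\omega\in\Omega^p(\partial M)$. Lemma~\ref{lem:terms_imm_isom} is a pointwise statement valid for any isometric immersion of any Riemannian manifold into Euclidean space (it never uses the boundary), so it applies verbatim to $\Bar{\iota}$. The work consists of identifying each ingredient for $\Bar{\iota}$ with the quantities in the statement.

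\emph{Step 1 (tangential projections).} The tangential projection of $\partial_{x_i}$ onto $T\partial M$ is $\Bar{\partial}_{x_i}$, since $\partial_{x_i}=\Bar{\partial}_{x_i}+g(\partial_{x_i}^T,\nu)\nu+\partial_{x_i}^\perp$ is an orthogonal decomposition along $T\partial M\oplus\R\nu\oplus T^\perp M$. Using the hypothesis $\nu\lrcorner\,\omega=0$, we get
\[
\partial_{x_i}^T\lrcorner\,\omega=\Bar{\partial}_{x_i}\lrcorner\,\omega+g(\partial_{x_i}^T,\nu)\,\nu\lrcorner\,\omega=\Bar{\partial}_{x_i}\lrcorner\,\omega ,
\]
hence $\iota^*(\partial_{x_i}^T\lrcorner\,\omega)=\Bar{\partial}_{x_i}\lrcorner\,\iota^*\omega$ along $\partial M$. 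Therefore the three left-hand sides in the statement are exactly $\sum_i|\Bar{\partial}_{x_i}\lrcorner\,\phi|^2$, $\sum_i|d^{\partial M}(\Bar{\partial}_{x_i}\lrcorner\,\phi)|^2$ and $\sum_i|\delta^{\partial M}(\Bar{\partial}_{x_i}\lrcorner\,\phi)|^2$, i.e.\ the left-hand sides of Lemma~\ref{lem:terms_imm_isom} for $(\partial M,\Bar{\iota},\phi)$. Items 1 and 3 then follow immediately, with $|\phi|^2=|\iota^*\omega|^2$ and $\delta^{\partial M}\phi=\delta^{\partial M}\iota^*\omega$.

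\emph{Step 2 (curvature term for item 2).} Lemma~\ref{lem:terms_imm_isom} applied to $\Bar{\iota}$ gives $|\nabla^{\partial M}\phi|^2+(p-1)|d^{\partial M}\phi|^2+\<\Bar{T}'\phi,\phi\>$. Here $\Bar{T}'=\sum_b(\Bar{II}^{[p]}_{N_b})^2$ for any orthonormal frame $\{N_b\}$ of the normal bundle of $\partial M$ in $\R^d$. I will take the frame $\{\nu,\nu_1,\dots,\nu_m\}$, which is orthonormal and spans $T^\perp\partial M=\R\nu\oplus T^\perp M|_{\partial M}$. From $\Bar{II}(X,Y)=II(X,Y)+S(X,Y)\nu$, with the first summand in $T^\perp M$ and the second along $\nu$, the Weingarten map of $\Bar{\iota}$ in the direction $\nu$ is $\pm S$. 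In the direction $\nu_a$ it is $\Bar{II}_{\nu_a}$, the restriction of $II_{\nu_a}$ to $T\partial M$ (projected). Hence $\Bar{T}'=(S^{[p]})^2+\sum_a(\Bar{II}^{[p]}_{\nu_a})^2=\Bar{T}^{[p]}$; the sign ambiguity is irrelevant since $S$ enters squared. Independence of the frame was already noted for $T^{[p]}$, so the choice of frame is legitimate.

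The main obstacle I expect is Step 2, specifically checking sign and orthogonality conventions. One must confirm that the $\nu$-component of $\Bar{II}$ is exactly the shape operator $S(X)=-\nabla_X\nu$ up to sign. One must also confirm that the canonical extension of the Weingarten maps to $p$-forms on $\partial M$ commutes with taking squares as in the definition of $\Bar{T}^{[p]}$. Both are routine once the Gauss formulas $D_XY=\nabla^M_XY+II(X,Y)$ and $\nabla^M_XY=\nabla^{\partial M}_XY+S(X,Y)\nu$ are composed. The remaining steps are purely algebraic substitutions.
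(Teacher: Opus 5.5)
Your proposal is correct and follows the paper's proof. The paper likewise shows $\iota^*(\partial_{x_i}^T\lrcorner\,\omega)=\Bar{\partial}_{x_i}\lrcorner\,\iota^*\omega$ using $\nu\lrcorner\,\omega=0$, and then invokes Lemma~\ref{lem:terms_imm_isom} for $\partial M$ immersed via $\Bar{\iota}$. Your Step 2 identifies the curvature term through the normal frame $\{\nu,\nu_1,\dots,\nu_m\}$ and shows it equals $\Bar{T}^{[p]}$; the paper leaves this identification implicit, and you make it explicit.
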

\noindent {\it Proof.} 
    For each $i=1, \dots , d$, $\iota^*(\partial_{x_i}^T\lrcorner \, \omega)=\Bar{\partial}_{x_i} \lrcorner \, \iota^* \omega$. Indeed, for all vector fields $X_1, \dots , X_{p-1}$ tangent to the boundary,
    \begin{eqnarray*}
        \iota^*(\partial_{x_i}^T\lrcorner \, \omega) (X_1, \dots , X_{p-1}) 
        &=& \omega (\partial_{x_i}^T, X_1, \dots , X_{p-1})\\
        &=& g(\partial_{x_i}^T, \nu) \underbrace{\omega (\nu, X_1, \dots , X_{p-1})}_{0} + \iota^* \omega (\Bar{\partial}_{x_i}, X_1, \dots , X_{p-1}) \\
        &=& (\Bar{\partial}_{x_i} \lrcorner \, \iota^* \omega) ( X_1, \dots , X_{p-1}).
    \end{eqnarray*}
 The desired result then follows directly from Lemma \ref{lem:terms_imm_isom}.
    \hfill$\square$
    \begin{thm}\label{thm:estimate_isom_imm}
Let $(M^{n+1},g)$ be a compact 
Riemannian manifold with smooth boundary $\partial M$. Assume that $M^{n+1}$ is isometrically immersed into $\R^{d}$. Then for every $p$-form $\omega$ on $M$ satisfying $\nu \lrcorner \, \omega=0$, we have
\begin{eqnarray}
    \label{eq:estimate_isom_imm}
    p \lambda_{1,p-1, \beta}(M)\int_{\partial M} |\omega|^2 \, d \mu_g &\le& \int_M \left( |\nabla \omega|^2 + \<  T^{[p]} \omega, \omega \> + (p-1) (|d \omega|^2 + |\delta \omega|^2)\right) \, d \mu_g \nonumber \\
    && + \beta \, \int_{\partial M} \left( |\nabla^{\partial M} \iota ^* \omega|^2 + \<  \Bar{T}^{[p]} \iota ^* \omega, \iota ^* \omega \> \right) \, d \mu_g \nonumber \\
    && + \beta \,(p-1) \int_{\partial M} \left(   |d^{\partial M} \iota ^* \omega|^2 + |\delta^{\partial M} \iota ^*\omega|^2\right) \, d \mu_g.
\end{eqnarray}
\end{thm}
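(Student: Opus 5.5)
The plan is to use the $(p-1)$-forms $\omega_i:=\partial_{x_i}^T\lrcorner\,\omega$, $i=1,\dots,d$, as test forms in the variational characterization \eqref{variat_char} of $\lambda_{1,p-1,\beta}(M)$. Then we sum over $i$ and evaluate the sums with Lemmas~\ref{lem:terms_imm_isom} and \ref{lem:terms_imm_isom_bound}.

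\emph{Admissibility.} First I check that each $\omega_i$ is an admissible test form, i.e.\ $\nu\lrcorner\,\omega_i=0$ on $\partial M$. This holds because
$$\nu\lrcorner\,(\partial_{x_i}^T\lrcorner\,\omega)=-\partial_{x_i}^T\lrcorner\,(\nu\lrcorner\,\omega)=0.$$
The characterization \eqref{variat_char} is stated as an infimum over nonzero forms. It can equivalently be written as the inequality
$$\lambda_{1,p-1,\beta}(M)\,\|\iota^*\eta\|^2_{L^2(\partial M)}\le \|d\eta\|^2+\|\delta\eta\|^2+\beta\big(\|d^{\partial M}\iota^*\eta\|^2+\|\delta^{\partial M}\iota^*\eta\|^2\big),$$
valid for every smooth $(p-1)$-form $\eta$ with $\nu\lrcorner\,\eta=0$. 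If $\iota^*\eta=0$ the inequality is trivial, so no nondegeneracy issue arises. I apply this with $\eta=\omega_i$ for each $i$.

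\emph{Summation.} Next I sum over $i=1,\dots,d$.
\begin{itemize}
\item On the left, Lemma~\ref{lem:terms_imm_isom_bound}(1) gives $\sum_i|\iota^*\omega_i|^2=p|\iota^*\omega|^2$. Since $\nu\lrcorner\,\omega=0$, we have $|\iota^*\omega|=|\omega|$ along $\partial M$. The left side therefore becomes $p\lambda_{1,p-1,\beta}(M)\int_{\partial M}|\omega|^2$.
\item In the interior, Lemma~\ref{lem:terms_imm_isom}(2),(3) give
$$\sum_i\big(|d\omega_i|^2+|\delta\omega_i|^2\big)=|\nabla\omega|^2+\langle T^{[p]}\omega,\omega\rangle+(p-1)\big(|d\omega|^2+|\delta\omega|^2\big),$$
which is exactly the first integrand in \eqref{eq:estimate_isom_imm}.
\item On the boundary, Lemma~\ref{lem:terms_imm_isom_bound}(2),(3) give
$$\sum_i\big(|d^{\partial M}\iota^*\omega_i|^2+|\delta^{\partial M}\iota^*\omega_i|^2\big)=|\nabla^{\partial M}\iota^*\omega|^2+\langle\bar T^{[p]}\iota^*\omega,\iota^*\omega\rangle+(p-1)\big(|d^{\partial M}\iota^*\omega|^2+|\delta^{\partial M}\iota^*\omega|^2\big).$$
\end{itemize}
Integrating these identities and multiplying the boundary terms by $\beta$ yields \eqref{eq:estimate_isom_imm}.

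\emph{Main point to verify.} The only step needing care is the use of Lemma~\ref{lem:terms_imm_isom_bound}. That lemma relies on the identity $\iota^*(\partial_{x_i}^T\lrcorner\,\omega)=\bar\partial_{x_i}\lrcorner\,\iota^*\omega$, together with applying Lemma~\ref{lem:terms_imm_isom} to the immersion $\bar\iota$ of $\partial M$. Under $\bar\iota$, the vectors $\bar\partial_{x_i}$ are the tangential projections onto $T\partial M$, and the normal bundle of $\partial M$ in $\R^d$ is spanned by $\nu,\nu_1,\dots,\nu_m$. This is why $\bar T^{[p]}$ contains $(S^{[p]})^2$ in addition to the $\bar{II}^{[p]}_{\nu_a}$ terms. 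Since the lemma is already established in the paper, I only need to confirm that the hypothesis $\nu\lrcorner\,\omega=0$ is exactly what it requires. With that, the proof reduces to the summation above.
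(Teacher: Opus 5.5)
Your proposal is correct and follows the paper's proof essentially verbatim: you test the variational characterization \eqref{variat_char} of $\lambda_{1,p-1,\beta}(M)$ with the admissible $(p-1)$-forms $\partial_{x_i}^T\lrcorner\,\omega$, sum over $i=1,\dots,d$, and evaluate the sums with Lemmas~\ref{lem:terms_imm_isom} and~\ref{lem:terms_imm_isom_bound}. Your two additional remarks are harmless details that the paper leaves implicit: the inequality form covers test forms with $\iota^*\eta=0$, and $|\iota^*\omega|=|\omega|$ holds on $\partial M$.
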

\noindent {\it Proof.} 
    For each $i=1, \dots , d$, the $(p-1)$-form $\partial_{x_i}^T\lrcorner \, \omega$ satisfies $\nu\lrcorner\, (\partial_{x_i}^T\lrcorner \, \omega)=0$. Then, by the variational characterization \eqref{variat_char}, we obtain
\begin{eqnarray*}
\lambda_{1,p-1, \beta}(M)\int_{\partial M} |\partial_{x_i}^T\lrcorner \, \omega|^2 \, d \mu_g &\le& \int_M \left(|d (\partial_{x_i}^T\lrcorner \, \omega)|^2+|\delta (\partial_{x_i}^T\lrcorner \, \omega)|^2\right)\, d\mu_g\\
&&+\beta \, \int_{\partial M}\left(|d^{\partial M} \iota^* (\partial_{x_i}^T\lrcorner \, \omega)|^2+|\delta^{\partial M} \iota^* (\partial_{x_i}^T\lrcorner \, \omega)|^2\right) \, d \mu_g
\end{eqnarray*}
Summing over $i=1, \dots, d$ and applying Lemmas \ref{lem:terms_imm_isom} and \ref{lem:terms_imm_isom_bound}, we obtain the claimed inequality.
    \hfill$\square$
\begin{thm}\label{gap_estimate}
Let $(M^{n+1},g)$ be a compact 
Riemannian manifold with boundary, isometrically immersed in $\R^{d}$. Assume that $W^{[p]}-T^{[p]} \ge 0$ for some $p=1, \dots , n$. 
Then 
\begin{equation}
\label{est:gap_Wentzell}\lambda_{1,p,\beta}(M)-\lambda_{1,p-1, \beta}(M)\geq \frac{1}{p}\left(\sigma_p + \beta \,\inf_{\partial M}(\Bar{W}^{[p]}-\Bar{T}^{[p]}) \right),\end{equation}
where $\bar{W}^{[p]}$ is the Bochner operator of $\partial M$. 
\end{thm}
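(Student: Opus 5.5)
Take $\omega=\hat\phi$, the harmonic tangential extension of an eigenform $\phi$ of $\mathcal{L}^{[p]}_\beta$ for $\lambda_{1,p,\beta}(M)$, and insert it into Theorem~\ref{thm:estimate_isom_imm}. By Theorem~\ref{thm:wentbc} it satisfies $\nu\lrcorner\,\omega=0$, $\iota^*\omega=\phi$, and
$$\mathcal{D}(\omega,\omega)+\beta\,\mathcal{D}^{\partial M}(\phi,\phi)=\lambda_{1,p,\beta}(M)\|\phi\|^2_{L^2(\partial M)}.$$
Also $|\omega|=|\phi|$ on $\partial M$, since the normal part vanishes. The aim is to rewrite the right-hand side of \eqref{eq:estimate_isom_imm} as $p$ times this Rayleigh quotient, minus curvature terms.

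\emph{Interior terms.} Use the Bochner formula $\Delta=\nabla^*\nabla+W^{[p]}$ in its integrated form with boundary term, following Raulot--Savo \cite{RS_DtN2012}:
$$\int_M|\nabla\omega|^2=\int_M\big(|d\omega|^2+|\delta\omega|^2-\langle W^{[p]}\omega,\omega\rangle\big)+\text{boundary term}.$$
For forms with $\nu\lrcorner\,\omega=0$, the boundary term should equal $-\int_{\partial M}\langle S^{[p]}\iota^*\omega,\iota^*\omega\rangle$. This follows by writing $\tfrac12\nu|\omega|^2=\langle\nabla_\nu\omega,\omega\rangle$ and using \eqref{eq:d_delta_RS} with $\nu\lrcorner\,\omega=0$, which gives $\iota^*\nabla_\nu\omega=\nu\lrcorner\, d\omega+S^{[p]}\iota^*\omega$. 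The signs are to be checked against the convention $\nu$ inward. The interior part of \eqref{eq:estimate_isom_imm} then becomes
$$p\,\mathcal{D}(\omega,\omega)-\int_M\langle (W^{[p]}-T^{[p]})\omega,\omega\rangle-\int_{\partial M}\langle S^{[p]}\phi,\phi\rangle,$$
and the integrand of the middle term is $\le0$ by hypothesis. By \eqref{min_Sp} the last term is at most $-\sigma_p\int_{\partial M}|\phi|^2$.

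\emph{Boundary terms.} Apply the Bochner formula on the closed manifold $\partial M$:
$$\int_{\partial M}|\nabla^{\partial M}\phi|^2=\mathcal{D}^{\partial M}(\phi,\phi)-\int_{\partial M}\langle\bar W^{[p]}\phi,\phi\rangle.$$
The $\beta$-terms then become
$$\beta\Big(p\,\mathcal{D}^{\partial M}(\phi,\phi)-\int_{\partial M}\langle(\bar W^{[p]}-\bar T^{[p]})\phi,\phi\rangle\Big)\le \beta\Big(p\,\mathcal{D}^{\partial M}(\phi,\phi)-\inf_{\partial M}(\bar W^{[p]}-\bar T^{[p]})\|\phi\|^2\Big),$$
where the infimum is understood as the smallest eigenvalue of the endomorphism.

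\emph{Combining.} Altogether,
$$p\lambda_{1,p-1,\beta}\|\phi\|^2\le p\lambda_{1,p,\beta}\|\phi\|^2-\Big(\sigma_p+\beta\inf(\bar W^{[p]}-\bar T^{[p]})\Big)\|\phi\|^2,$$
and dividing by $p\|\phi\|^2$ gives \eqref{est:gap_Wentzell}. Here $\sigma_p$ is read as $\sigma_p(\partial M)$.

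The main obstacle is the interior boundary term. One must verify carefully that, for $\nu\lrcorner\,\omega=0$, the Bochner boundary term reduces exactly to $-\langle S^{[p]}\iota^*\omega,\iota^*\omega\rangle$. The $\nu\lrcorner\, d\omega$ contribution must cancel between $\tfrac12\partial_\nu|\omega|^2$ and the boundary terms of $\mathcal{D}$, and the $\iota^*\delta\omega$ term pairs with $\nu\lrcorner\,\omega=0$ and so vanishes. The first check would be the case of a flat ball, where the sign of this term must come out so that $\sigma_p$ appears with a positive contribution to the gap.
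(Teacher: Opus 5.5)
Your proposal is correct and is essentially the paper's proof: the paper also inserts the harmonic tangential extension $\hat\phi$ of a normalized first eigenform into \eqref{eq:estimate_isom_imm}, substitutes the Reilly formula \cite[Thm.~3]{RaulotSavo2011} on $M$ and the Bochner formula on $\partial M$, and concludes with $W^{[p]}-T^{[p]}\ge 0$ and \eqref{min_Sp}. The only difference is that the paper cites the Reilly formula for the boundary term $-\int_{\partial M}\langle S^{[p]}\phi,\phi\rangle$, whereas you derive it from \eqref{partial_integration_laplacian} and \eqref{eq:d_delta_RS}; your sign check comes out right with the inward normal, since $\iota^*\nabla_\nu\hat\phi=\nu\lrcorner\, d\hat\phi+S^{[p]}\phi$ makes the $\nu\lrcorner\, d\hat\phi$ terms cancel exactly.
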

{\it Proof.} 
Let $\phi$ be an eigenform of the Wentzell operator associated with the eigenvalue $\lambda_{1,p,\beta}(M)$. Without loss of generality, we may assume that $\|\phi\|_{L^2(\partial M)}=1$. We denote by $\hat{\phi}$ the harmonic tangential extension of $\phi$ to $M$. The Reilly formula \cite[Thm. 3]{RaulotSavo2011} applied first to $\hat{\phi}$ on $M$ and then to $\phi$ on $\partial M$, leads to
$$\int_M |\nabla \hat{\phi}|^2 \, d \mu_g = \int_M (|d \hat{\phi}|^2 + |\delta \hat{\phi}|^2 -\<  W^{[p]} \hat{\phi}, \hat{\phi} \> ) \, d \mu_g - \int_{\partial M} \<  S^{[p]} {\phi}, {\phi} \>  \, d \mu_g,$$
and
$$\int_{\partial M} |\nabla^{\partial M} {\phi}|^2 \, d \mu_g = \int_{\partial M} (|d^{\partial M} {\phi}|^2 + |\delta^{\partial M} {\phi}|^2 -\<  \Bar{W}^{[p]} {\phi}, {\phi} \> ) \, d \mu_g .$$
Applying \eqref{eq:estimate_isom_imm} to $\hat{\phi}$ and then substituting the previous identities, we obtain
\begin{eqnarray*}
p \lambda_{1,p-1, \beta}(M)  \le  p \lambda_{1,p, \beta} (M)  + \int_M \<  (T^{[p]} - W^{[p]}) \hat{\phi}, \hat{\phi} \>  \, d \mu_g   + \int_{\partial M} (\beta \, \<  (\Bar{T}^{[p]} -\Bar{W}^{[p]}) {\phi}, {\phi} \> - \<  S^{[p]} {\phi}, {\phi} \> )  \, d \mu_g .   
\end{eqnarray*}
Therefore, using the assumption $W^{[p]}-T^{[p]} \ge 0$ and the inequality 
$\langle S^{[p]}(\phi), \phi \rangle \ge \sigma_p  |\phi|^2$, 
we obtain the estimate. 
\hfill$\square$
\begin{cor}
   Let $M$ be any compact domain in $\R^{n+1}$. Then, the estimate~\eqref{est:gap_Wentzell} holds for all $p=1, \dots , n$. Moreover, when $M$ is the unit Euclidean ball of $\R^{n+1}$, equality occurs for $p \ge \frac{n+3}{2}$.
\end{cor}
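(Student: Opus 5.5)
The plan is to specialize Theorem~\ref{gap_estimate} to the case $d=n+1$ and then check equality on the ball using the explicit spectrum from Section~\ref{Sec:Wentz_spectrum_Euclidean_ball}.

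\textbf{Step 1: the estimate holds for every $p$.} Let $M$ be a compact domain of $\R^{n+1}$ and take $\iota_0$ to be the inclusion, so $d=n+1$. The normal bundle $T^\perp M$ is then trivial ($m=0$), hence $II=0$ and $T^{[p]}=0$. Since $M$ is flat, the Bochner operator vanishes, $W^{[p]}=0$. Thus $W^{[p]}-T^{[p]}=0\ge 0$ for every $p=1,\dots,n$, and Theorem~\ref{gap_estimate} gives \eqref{est:gap_Wentzell} directly. In this situation $\Bar{T}^{[p]}=(S^{[p]})^2$, because $\Bar{II}$ reduces to $S$.

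\textbf{Step 2: the right-hand side on the unit ball.} Take $M=\mathbb{B}^{n+1}$. The inward normal is $\nu=-x$, so $S(X)=-\nabla_X\nu=X$ and all principal curvatures equal $1$. This gives $\sigma_p=p$, $S^{[p]}=p\,\mathrm{Id}$ and $\Bar{T}^{[p]}=p^2\,\mathrm{Id}$. The Bochner operator of the round sphere on $p$-forms is $\Bar{W}^{[p]}=p(n-p)\,\mathrm{Id}$. Hence the right-hand side of \eqref{est:gap_Wentzell} equals
$$\tfrac1p\big(p+\beta(p(n-p)-p^2)\big)=1+\beta(n-2p).$$

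\textbf{Step 3: the left-hand side on the unit ball.} For $p\ge\frac{n+3}{2}$ we have $p-1\ge\frac{n+1}{2}$, so both degrees $p-1$ and $p$ fall in the second case of Corollary~\ref{cor:first_eigen_ball}. The one exception is $p=n$: there I use Theorem~\ref{spectrum_n_Went_ball}, which gives $\lambda_{1,n,\beta}=n+1$, and this coincides with the value of $(p+1)(1+\beta(n-p))$ at $p=n$. In all cases,
$$\lambda_{1,p,\beta}-\lambda_{1,p-1,\beta}=(p+1)(1+\beta(n-p))-p(1+\beta(n-p+1))=1+\beta(n-2p),$$
which is exactly the right-hand side from Step~2, so equality holds.

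\textbf{Main obstacle.} The one delicate point is the bookkeeping of curvature conventions. The sign of $S$ under the inward normal must give $\sigma_p=+p$. The value $\Bar{W}^{[p]}=p(n-p)$ must come from the Gallot--Meyer formula for constant curvature $1$. One must also confirm that the index ranges in Corollary~\ref{cor:first_eigen_ball} and Theorem~\ref{spectrum_n_Went_ball} cover the boundary cases $p=n$ and $p-1=\frac{n+1}{2}$.
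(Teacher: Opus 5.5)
Your proposal is correct and follows the paper's proof essentially step for step. You apply Theorem~\ref{gap_estimate} with $W^{[p]}=T^{[p]}=0$, evaluate the right-hand side on the ball as $1+\beta(n-2p)$ from $\sigma_p=p$, $\Bar{W}^{[p]}=p(n-p)$ and $\Bar{T}^{[p]}=p^2$, and read off the left-hand side from Corollary~\ref{cor:first_eigen_ball}. Your separate handling of $p=n$ via Theorem~\ref{spectrum_n_Went_ball} is a small but worthwhile addition, since Corollary~\ref{cor:first_eigen_ball} is only stated for $p\le n-1$ and the paper uses it there without comment (the needed comparison $n+1<n\frac{n+3}{n+1}+\beta n$ indeed holds).
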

{\it Proof.} The first statment is immediate. Assume now $M=\mathbb{B}^{n+1}$. In this case, $W^{[p]}=T^{[p]}=0$, $\sigma_p=p$, $\Bar{W}^{[p]}=p(n-p)$ and $\Bar{T}^{[p]} =p^2$. The r.h.s. of \eqref{est:gap_Wentzell} is equal to $1+\beta \, (n-2p)$. Let us now compute the l.h.s. of \eqref{est:gap_Wentzell}, i.e., the gap between the first eigenvalues of the Wentzell operator in degrees $p$ and $p-1$. By applying Corollary \ref{cor:first_eigen_ball}, we obtain:
$$
\lambda_{1,p,\beta}(\mathbb{B}^{n+1})- \lambda_{1,p-1, \beta}(\mathbb{B}^{n+1})=\begin{cases}
\medskip \frac{n+3}{n+1}+\beta \, (n+2-2p) & \textrm{for }    p < \frac{n+1}{2}, \\
 \medskip \frac{n+3}{n+1} & \textrm{for }    p = \frac{n+1}{2}, \\
 \medskip 1+\beta (n-2p) & \textrm{for }    p \ge \frac{n+3}{2}.
\end{cases}
$$
It is clear that equality is achieved only in the last case.
\hfill$\square$

\vspace{0.3em}

Another consequence of Theorem~\ref{gap_estimate} is the following:
\begin{cor}
   Let $M$ be any compact domain in $\R^{n+1}$. If $\sigma_p\geq 
0$ and $\Bar{W}^{[p]}-\Bar{T}^{[p]} \ge 0$ for all $p=1, \dots , n$, then, 
   $$\lambda_{1,1,\beta}(M) \le \lambda_{1,2,\beta}(M) \le \dots \le \lambda_{1,n,\beta}(M).$$
\end{cor}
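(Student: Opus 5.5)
The plan is to apply Theorem~\ref{gap_estimate} once for each degree $p=1,\dots,n$, reading it as a statement about consecutive first eigenvalues, and then chain the resulting inequalities. Since $M$ is a domain in $\R^{n+1}$, we take $d=n+1$ and $\iota_0$ the inclusion. The normal bundle $T^\perp M$ is then trivial ($m=0$), so $II=0$ and $T^{[p]}=0$. The ambient metric is flat, so the curvature tensor $R$ vanishes and the Bochner operator satisfies $W^{[p]}=0$. The hypothesis $W^{[p]}-T^{[p]}\ge 0$ of Theorem~\ref{gap_estimate} therefore holds trivially for every $p=1,\dots,n$, which is also the content of the first statement of the preceding corollary.

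Next, for each $p\in\{2,\dots,n\}$, Theorem~\ref{gap_estimate} gives
\[
\lambda_{1,p,\beta}(M)-\lambda_{1,p-1,\beta}(M)\ \ge\ \frac{1}{p}\left(\sigma_p+\beta\,\inf_{\partial M}\big(\Bar{W}^{[p]}-\Bar{T}^{[p]}\big)\right).
\]
By assumption $\sigma_p\ge 0$, which I read as $\sigma_p(\partial M)=\inf_{\partial M}\sigma_p\ge 0$; this is the quantity appearing in the proof through the pointwise bound \eqref{min_Sp}. Also by assumption $\Bar{W}^{[p]}-\Bar{T}^{[p]}\ge 0$ as an operator, so its infimum over $\partial M$, meaning the infimum of its lowest eigenvalue, is nonnegative. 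Since $\beta>0$ and $p>0$, the right-hand side is nonnegative, and hence $\lambda_{1,p-1,\beta}(M)\le\lambda_{1,p,\beta}(M)$.

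Chaining these inequalities for $p=2,\dots,n$ yields $\lambda_{1,1,\beta}(M)\le\lambda_{1,2,\beta}(M)\le\dots\le\lambda_{1,n,\beta}(M)$. The case $p=1$ would additionally give $\lambda_{1,0,\beta}(M)=0\le\lambda_{1,1,\beta}(M)$, but this is not needed. There is no real obstacle in this argument. The only point to check is how the shorthand $\inf_{\partial M}(\Bar{W}^{[p]}-\Bar{T}^{[p]})$ in \eqref{est:gap_Wentzell} should be interpreted: it must be the pointwise lower bound $\langle(\Bar{W}^{[p]}-\Bar{T}^{[p]})\phi,\phi\rangle\ge \kappa|\phi|^2$ used at the end of the proof of Theorem~\ref{gap_estimate}, so that operator nonnegativity indeed gives $\kappa\ge 0$.
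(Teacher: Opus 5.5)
Your proposal is correct and is exactly the argument the paper intends: for a Euclidean domain $W^{[p]}=T^{[p]}=0$, so Theorem~\ref{gap_estimate} applies in every degree. Its right-hand side is nonnegative under the stated hypotheses, and chaining the resulting inequalities over $p=2,\dots,n$ gives the result.

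As a side remark, the statement as written is vacuous. For a domain in $\R^{n+1}$ one has $\Bar{W}^{[n]}=0$ and $\Bar{T}^{[n]}=(S^{[n]})^2=(nH)^2$, so the hypothesis at $p=n$ forces $H\equiv 0$ on the compact hypersurface $\partial M$, which is impossible. Your argument in fact only uses $\sigma_p+\beta\inf_{\partial M}(\Bar{W}^{[p]}-\Bar{T}^{[p]})\ge 0$ for each $p$, a condition that can actually hold (e.g.\ for the unit ball when $\beta\le 1/n$).
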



\begin{thebibliography}{99}

\bibitem{BL1999} 
A.~Bendali and K.~Lemrabet, \emph{The effect of a thin coating on the scattering of a time-harmonic wave for the Helmholtz equation}, SIAM J. Appl. 
Math. {\bf 56} (6) (1996) 1664–1693.


\bibitem{DKL2016} 
M.~Dambrine, D.~Kateb and J.~Lamboley, \emph{An extremal eigenvalue problem for the Wentzell-Laplace operator}
Ann.~I.~H.~Poincaré, Analyse non linéaire, \textbf{33} (2016) no. 2, 409--450.

\bibitem{DengDuMaoZhao2026}
Y.~Deng, F.~Du, J.~Mao, Y.~Zhao, \emph{Sharp eigenvalue estimates and related rigidity theorems},   Hokkaido Math. J. \textbf{55} (1) (2026), 57--85.

\bibitem{DuWangXia2018}
F.~Du, Q.~Wang and C.~Xia, \emph{
Estimates for eigenvalues of the Wentzell–Laplace operator},
Journal of Geometry and Physics,
\textbf{129} (2018),  25--33.

\bibitem{DuMaoWangXia2018}
F.~Du, J.~Mao, Q.~Wang and C.~Xia, \emph{
Isoperimetric bounds for eigenvalues of the
Wentzell-Laplace, the Laplacian and a biharmonic
Steklov problem}, \href{https://arxiv.org/abs/1808.10578v2}
{arXiv:1808.10578v2}.







\bibitem{elchamiginouxhabib21}
F.~El Chami, N.~Ginoux and G.~Habib, \emph{New eigenvalue estimates involving Bessel functions}, Publ. Mat.
\textbf{65} (2021), 681--726

\bibitem{EGHM}
F.~El Chami, N.~Ginoux, G.~Habib and O.~Makhoul, \emph{Biharmonic Steklov operator on differential forms}, Annales math\'ematiques Blaise Pascal \textbf{31} (2024) no. 2, pp. 189-237. 


\bibitem{FangXia2021}
F.~Fang and C.~Xia, \emph{
Isoperimetric Bounds for Lower Order Eigenvalues}, \href{https://arxiv.org/abs/2110.01869}
{arXiv:2110.01869}.

\bibitem{Feller1952} 
W.~Feller, \emph{The parabolic differential equations and the associated semi-groups of transformations}, Ann. of Math. \textbf{55} (3) (1952), 468--519.

\bibitem{Feller1954}
W.~Feller, \emph{Diffusion Processes in One Dimension}, Trans. Amer. Math. Soc. {\bf 77} (1954), no. 1, 1--31. 

\bibitem{GM}
S.~Gallot and D.~Meyer, \emph{Op\'erateur de courbure et laplacien des formes diff\'erentielles d'une vari\'et\'e riemannienne}, J. Math. Pures Appl.  \textbf{54} (1975), 259--284.


\bibitem{guerinisavo2004}
P.~Gu\'erini and A.~Savo, {\it Eigenvalue and gap estimates for the Laplacian acting on $p$-forms}, Trans. Amer. Math. Soc. {\bf 356} (2004), no. 1, 319--344.




\bibitem{Michel2019}
D.~Michel, \emph{Eigenvalue and gap estimates of isometric immersions for the 
Dirichlet-to-Neumann operator acting on $p$-forms}, C.~R.~Acad. Sci. Paris, Ser. I  \textbf{357} (2019), no. 2, 180--187.



\bibitem{Ndiaye2022}
A.M.~Ndiaye, \emph{Extrinsic Estimates of Eigenvalues for the Laplacian with Wentzel
Boundary Condition}, PhD thesis, Universit\'e de Neuch\^atel, 2022. 


\bibitem{RaulotSavo2011}
S.~Raulot and A.~Savo, \emph{A Reilly formula and eigenvalue estimates for
differential forms}, J. Geom. Anal. \textbf{21} (2011), no. 3, 620--640.

\bibitem{RS_DtN2012} 
S.~Raulot and A.~Savo, \emph{On the first eigenvalue of the Dirichlet-to-Neumann operator on forms}, J. Funct. Anal., \textbf{262} (3) (2012), 889--914.

\bibitem{RS_DtN2014} 
S.~Raulot and A.~Savo, \emph{On the spectrum of the Dirichlet-to-Neumann operator acting on forms of a Euclidean domain},
J. Geom. Phys., \textbf{77} (2014), 1--12.




\bibitem{Sc}
G.~Schwarz, \emph{Hodge decomposition - A method for solving
boundary value problems}, Lecture notes in Mathematics, Springer, 1995.


\bibitem{TaylorPDEII}
M.E.~Taylor,  \emph{Partial Differential Equations II}, Qualitative Studies of Linear Equations, Springer, 2023.

\bibitem{Wentzell1959}
A.D.~Venttsel, \emph{On Boundary Conditions For Multidimensional Diffusion Processes}, Theory of Probability and its Applications, \textbf{4} (2) (1959), 164--177.

\bibitem{XiaWang2018}
C.~Xia and Q.~Wang, \emph{Eigenvalues of the Wentzell–Laplace operator and of the fourth order Steklov problems}, J.~Differential Equations, \textbf{264} (2018), 6486--6506.





\end{thebibliography}
\end{document}